\newtheorem {theorem*}{Theorem}
\newtheorem {theorem} {Theorem}
\newtheorem{lemma}{Lemma}
\newtheorem{corollary}{Corollary}
\numberwithin{equation}{section}
\numberwithin{lemma}{section}
\numberwithin{theorem}{section}
\numberwithin{proposition}{section}
\numberwithin{corollary}{section}
\begin{document}
\arraycolsep=1pt

\title{\Large\bf Toeplitz Operators with Positive Measures on Harmonic Fock Spaces
\footnotetext{\hspace{-0.35cm}
			\endgraf{\it E-mail: gx1780521400@163.com(Xue Gou)}
			\endgraf \hspace{1.1cm} {\it 763207251@qq.com(Xin Hu)}
			\endgraf \hspace{1.1cm} {\it huangsui@cqnu.edu.cn(Sui Huang) }
			\endgraf S. Huang was Supported by National Natural Science Foundation of China(Grant No.11501068), the Natrual Science Foundation of Chongqing (No.cstc2019jcyj-msxmX0295).}}
	\author{Xue Gou, Xin Hu, Sui Huang\thanks{Corresponding author} \\
		\small \em School of Mathematical Sciences, Chongqing Normal University, Chongqing, China}

\date{ }
\maketitle

\vspace{-0.8cm}

\begin{center}
\begin{minipage}{16cm}\small
{\noindent{\bf Abstract} \quad In this paper, we study the basic properties of Toeplitz Operators  with positive measures $\mu$ on harmonic Fock spaces. We prove equivalent conditions for boundedness, compactness and Schatten classes $S_{p}$ of $T_{\mu}$ by the methods of Berezin transform of operators.
\endgraf{\bf Mathematics Subject Classification (2010).}\quad  47B35, 47B10.
\endgraf{\bf Keywords.}\quad  Toeplitz operator; harmonic Fock space; harmonic Fock Carleson-type measures; boundedness; compactness; Schatten class.}
\end{minipage}
\end{center}

\section{Introduction}\label{s1}
 For any positive parameter $\alpha$, we consider the Gaussian measure
$$ d\lambda_{\alpha}(z)=\frac{\alpha}{\pi}e^{-\alpha|z|^{2}}dA(z),$$
 where $dA(z)=dxdy$ is the area measure on the complex plane $\mathbb{C}$. Suppose $r>0$,
we write $r\mathbb{Z}^{2}=\{nr+imr|n\in\mathbb{Z}, m\in\mathbb{Z}\}$ to denote lattices in complex plane $\mathbb{C}$,
and use
 $$S_{r}=\{z=x+iy| -\frac{r}{2}\leq x< \frac{r}{2}, -\frac{r}{2}\leq y< \frac{r}{2}\}$$
to represent the most basic small lattice in $r\mathbb{Z}^{2}$.
Obviously, the complex plane has the following divisions: $\mathbb{C}=\bigcup\{S_{r}+z|z\in r\mathbb{Z}^{2}\}$
and the area of $S_{r}$ is $A(S_{r})= r^{2}$, more details can be found in [1].
For any $p\geq1$, the space
$$L^{p}_{\alpha}(\mathbb{C}, dA)=\{f|\int_{\mathbb{C}}|f(z)e^{-\frac{\alpha|z|^{2}}{2}}|^{p}dA(z)<+\infty\}$$
denotes the space of all Lebesgue measurable functions $f$ on $\mathbb{C}$ such that $$f(z)e^{-\frac{\alpha}{2}|z|^{2}} \in L^{p}(\mathbb{C}, dA),$$ whose norm  is defined by
$$ ||f ||_{p,\alpha}=\{\frac{p\alpha}{2\pi}\int_{\mathbb{C}}|f(z)e^{-\frac{\alpha}{2}|z|^{2}}|^{p}dA(z)\}^{\frac{1}{p}}.$$
The Fock space $F^{p}_{\alpha}$ consists of all  entire functions $f(x)$ in $ L^{p}_{\alpha}(\mathbb{C}, dA)$,
 and we use $F^{p}_{h}$ to denote the spaces of all harmonic functions $f$ on $\mathbb{C}$ in $L^{p}_{\alpha}(\mathbb{C}, dA)$
 which are defined by Engli$\check{s}$ in [3] firstly with norm
 $$ ||f ||_{h,p,\alpha}=\{\frac{p\alpha}{2\pi}\int_{\mathbb{C}}|f(z)e^{-\frac{\alpha}{2}|z|^{2}}|^{p}dA(z)\}^{\frac{1}{p}}.$$
Specially, we write $F^{\infty}_{h}$ to denote the space of all harmonic functions $f$
which satisfy
$$ ||f||_{h,\infty, \alpha}=\textmd{ess sup}\{|f(z)|e^{-\frac{\alpha}{2}|z|^{2}}|z\in\mathbb{C}\}<+\infty .$$
We write  those norm as $||f ||_{h,p}$ for conveniece. When $p\geq1, $ $F^{p}_{\alpha}$ and $F^{p}_{h}$ are closed subspaces of $L^{p}_{\alpha}. $
And $F^{2}_{\alpha}$ and $F^{2}_{h}$ are Hilbert spaces with the inner product
$$\langle f, g\rangle=\int_{\mathbb{C}}f(z)\overline{g(z)}d\lambda_{\alpha}(z).$$
The reproducing kernels of $F^{2}_{\alpha}$ are $K_{z}(\omega)$=$e^{\alpha\bar{z}\omega}$,
and the normal reproducing kernels are $k_{z}(\omega)=e^{\alpha\bar{z}\omega-\frac{\alpha}{2}|z|^{2}}$.
\par
If $f$ belongs to $F^{p}_{h},$ we can write $f=f_{1}+\overline{f_{2}}, $
 where $f_{1}$ and $f_{2}$ are entire functions and which can be found in  [3]. Through a straight calculation,
\begin{align}
f(z)
 =&\langle f_{1},K_{z}\rangle+\overline{\langle f_{2},K_{z}\rangle}\\\nonumber
 =&\langle f_{1},K_{z}\rangle+\langle f_{1},\overline{K_{z}}\rangle+\langle\overline{f_{2}},K_{z}\rangle+\langle\overline{f_{2}},\overline{K_{z}}\rangle\\\nonumber
 =&\langle f_{1}+\overline{f_{2}},K_{z}+\overline{K_{z}}\rangle,
\end{align}
for any $f\in F^{2}_{h}$, so the reproducing kernels in $F^{2}_{h}$ are $H_{z}(\omega)=K_{z}(\omega)+\overline{K_{z}(\omega)}$ . And the normal reproducing kernels in $F^{2}_{h}$ are
$$h_{z}(\omega)=\frac{H_{z}(\omega)}{\sqrt{H_{z}(z)}}=\frac{K_{z}(\omega)+\overline{K_{z}(\omega)}}{\sqrt{2}\sqrt{K_{z}(z)}}
=\frac{\sqrt{2}}{2}(k_{z}(\omega)+\overline{k_{z}(\omega)}).$$
There is a orthogonal projection $P$ from $L_{\alpha}^{2}(\mathbb{C}, dA)$ onto $F^{2}_{h}$
which can be represented by
 $$(Pf)(z)=<f,H_z>=\int_{\mathbb{C}}f(\omega)\overline{H_{z}(\omega)}d\lambda_{\alpha}(\omega).$$
 This integral formula also holds for all $f\in L_\alpha^1(\mathbb{C}, dA)$.
 \par
Suppose $\mu$ is a positive Borel measure on $\mathbb{C}. $ Define Toeplitz operator with $\mu$ on $F_{h}^{2}$
as follows:
$$(T_{\mu}f)(z)=\int_{\mathbb{C}}f(\omega)\overline{H_{z}(\omega)}e^{-\alpha|\omega|^{2}}d\mu(\omega),$$
which is well defined. Let $f(\omega)=H_{z}(\omega)$, so $$(T_{\mu}H_{z})(z)=\int_{\mathbb{C}}|H_{z}(\omega)|^{2}e^{-\alpha|\omega|^{2}}d\mu(\omega).$$
If
$d\mu=\frac{\alpha}{\pi}\varphi dA$, where $\varphi\geq 0$ and satisfies that $\varphi f\in L_{\alpha}^{2}(\mathbb{C}, dA)$, then we can define Toeplitz operator with $\varphi$ on $F_{h}^{2}$
\begin{align}\nonumber
(T_{\varphi}f)(z)
=&\int_{\mathbb{C}}f(\omega)\overline{H_{z}(\omega)}e^{-\alpha|\omega|^{2}}\frac{\alpha}{\pi} \varphi(\omega)dA\\\nonumber
=&\int_{\mathbb{C}}f(\omega)\varphi(\omega)\overline{H_{z}(\omega)}d\lambda_{\alpha}(\omega)\\\nonumber
=&(PM_{\varphi}f)(z).
\end{align}
In this way, $T_{\varphi}$ agrees with the classic definition of Toeplitz operators on analytic function spaces.
\par
In the paper, we will study properties of $T_{\mu}$ by its  Berezin transform  as follows:
\begin{align}\nonumber
\widetilde{\mu}(z)=\widetilde{T}_{\mu}(z)
=&\langle T_{\mu}h_{z},h_{z}\rangle\\\nonumber
=&\int_{\mathbb{C}}(T_{\mu}h_{z})(\omega)\overline{h_{z}(\omega)}d\lambda_{\alpha}(\omega)\\\nonumber
=&\int_{\mathbb{C}}(\int_{\mathbb{C}}h_{z}(u)\overline{H_{\omega}(u)}e^{-\alpha|u|^{2}}d\mu(u))\overline{h_{z}(\omega)}
d\lambda_{\alpha}(\omega)\\\nonumber
=&\int_{\mathbb{C}}\overline{\int_{\mathbb{C}}h_{z}(\omega)H_{\omega}(u)d\lambda_{\alpha}(\omega)}h_{z}(u)e^{-\alpha|u|^{2}}d\mu(u)\\\nonumber
=&\int_{\mathbb{C}}\overline{\int_{\mathbb{C}}h_{z}(\omega)\overline{H_{u}(\omega)}
d\lambda_{\alpha}(\omega)}h_{z}(u)e^{-\alpha|u|^{2}}d\mu(u)\\\nonumber
=&\int_{\mathbb{C}}\overline{h_{z}(u)}h_{z}(u)e^{-\alpha|u|^{2}}d\mu(u)\\\nonumber
=&\int_{\mathbb{C}}|h_{z}(u)|^{2}e^{-\alpha|u|^{2}}d\mu(u).
\end{align}
In the same way, we can get that
\begin{align}\nonumber
\widetilde{\varphi}(z)=\widetilde{T}_{\varphi}(z)
=&\langle T_{\varphi}h_{z},h_{z}\rangle\\\nonumber
=&\langle \varphi h_{z},h_{z}\rangle\\\nonumber
=&\int_{\mathbb{C}}\varphi(\omega)h_{z}(\omega)\overline{h_{z}(\omega)}d\lambda_{\alpha}(\omega)\\\nonumber
=&\int_{\mathbb{C}}\varphi(\omega)|h_{z}(\omega)|^{2}d\lambda_{\alpha}(\omega).
\end{align}
\par
There are many similar results about boundedness and compactness of the  positive Toeplitz operators on
the Bergman space and Fock space. The Carleson-type measures on the Bergman space  is  studied firstly
by Hastings[4], and the positive Toeplitz operators on the
Bergman space in terms of Carleson-type measures first appears in McDonald and Sundberg[5],
Luecking[6] is the first one to study Toeplitz operators on the Bergman space with measures as
symbols, this idea is further pursued in Zhu[7]. Isralowitz and Zhu discussed the boundedness, compactness and $S_p$-class of Toeplitz operators $T_\mu$ with Carleson-type measures
as symbols on Fock space $F_\alpha^2$
in [8]. Wang study basic properties of Toeplitz operators $T_\mu$ on Fock-type space in [9].Stroefhoff used the Berezin transform of operators to
get characterizations about the properties on many kinds of classic spaces such as Hardy
space, Bergman space, and Fock space in [10], which covered the results of boundedness
and compactness of Toeplitz operators on Fock space studied by Coburn, Berger and Zhu in [11-14].
\par
Engli$\check{s}$ studied the properties of Berezin transform of operators on harmonic
function spaces including harmonic Fock space in [3]. Basing on the discussion by Engli$\check{s}$, we use the Berezin transform of operators to study the properties of Toeplitz Operators $T_\mu$ with Carleson-
type measures $\mu$ on harmonic Fock spaces, including characteristics of harmonic Fock-Carleson measures,
boundedness and compactness of $T_\mu$. Moreover, we obtain the necessary and sufficient condition for $S_p$-class of $T_\mu$.

\section{Harmonic Fock-Carleson Measures}
\noindent In this section, we will characterize Carleson-type measures for the harmonic Fock spaces.
First, we will estimate the norm of $h_{z}(\omega)$.
\begin{lemma}
 The norm of $h_{z}(\omega)$ is not more than $\sqrt{2}$ in $F_{h}^{p}$, where $ p\geq1$.
\end{lemma}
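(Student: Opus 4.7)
The plan is to estimate $\|h_z\|_{h,p}$ directly by using the explicit formula
$h_z(\omega)=\frac{\sqrt{2}}{2}\bigl(k_z(\omega)+\overline{k_z(\omega)}\bigr)$
and reducing the computation to the known $p$-norm of the normalized analytic kernel $k_z$ in $F^p_\alpha$.

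First I would apply the triangle inequality in $L^p_\alpha(\mathbb{C},dA)$ to obtain
$\|h_z\|_{h,p}\le\tfrac{\sqrt{2}}{2}\bigl(\|k_z\|_{p,\alpha}+\|\overline{k_z}\|_{p,\alpha}\bigr)$.
Since $|\overline{k_z(\omega)}|=|k_z(\omega)|$ pointwise, the two summands coincide, and the problem reduces to showing $\|k_z\|_{p,\alpha}=1$ for all $p\ge 1$.

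Next I would carry out the routine computation of $\|k_z\|_{p,\alpha}$. Writing out
$|k_z(\omega)|e^{-\frac{\alpha}{2}|\omega|^2}=e^{\alpha\operatorname{Re}(\bar z\omega)-\frac{\alpha}{2}|z|^2-\frac{\alpha}{2}|\omega|^2}=e^{-\frac{\alpha}{2}|z-\omega|^2}$,
so that
\begin{equation*}
\|k_z\|_{p,\alpha}^p=\frac{p\alpha}{2\pi}\int_{\mathbb{C}}e^{-\frac{p\alpha}{2}|z-\omega|^2}\,dA(\omega)=1
\end{equation*}
by a translation and a Gaussian integral. Therefore $\|h_z\|_{h,p}\le\sqrt{2}$ for every $1\le p<\infty$. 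For the $p=\infty$ endpoint I would handle it separately but by the same idea: $|h_z(\omega)|e^{-\frac{\alpha}{2}|\omega|^2}\le\sqrt{2}|k_z(\omega)|e^{-\frac{\alpha}{2}|\omega|^2}=\sqrt{2}e^{-\frac{\alpha}{2}|z-\omega|^2}\le\sqrt{2}$, giving the same bound in essential supremum.

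There is no genuine obstacle here; the only points requiring a little care are noting that $\|\,\overline{k_z}\,\|_{p,\alpha}=\|k_z\|_{p,\alpha}$ (since the $L^p_\alpha$ norm only sees $|\cdot|$) and treating $p=\infty$ by a pointwise argument rather than by an integral. Everything else is a one-line Gaussian computation using the explicit kernel formula already recorded in the excerpt.
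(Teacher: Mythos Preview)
Your proof is correct and follows essentially the same route as the paper: apply the triangle inequality to $h_z=\frac{\sqrt{2}}{2}(k_z+\overline{k_z})$ and use $\|k_z\|_{p,\alpha}=\|\overline{k_z}\|_{p,\alpha}=1$. The only difference is that you supply the Gaussian computation of $\|k_z\|_{p,\alpha}=1$ explicitly (and treat $p=\infty$ separately), whereas the paper simply cites this fact from \cite{ref1}.
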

\begin{proof}
For $ p\geq1$, according to $\|k_{z}\|_{h,p}=1$
in \cite{ref1}, we have
\begin{align}\nonumber
\|h_{z}\|_{h,p}
= &\|\frac{\sqrt{2}}{2}(k_{z}+\overline{k_{z}})\|_{h,p}\\ \nonumber
\leqslant &\frac{\sqrt{2}}{2}(\|k_{z}\|_{h,p}+\|\overline{k_{z}}\|_{h,p})\\\nonumber
= &\sqrt{2}.
\end{align}
Specially when $p=2$, by Riesz representation theorem, we have $\|h_{z}\|_{h,2}=1.$
\end{proof}
\par We will discuss  pointwise estimate for functions in $F_{h}^{p}$ which should be used for later theorems.
\begin{lemma}
For any $r>0$ and $p\geqslant1$. If $f$ is a harmonic function, there exists a positive constant $C=C(p, \alpha, r)$ such that
$$|f(a)e^{-\frac{\alpha|a|^{2}}{2}}|^{p}\leqslant C\int_{B(a,r)}|f(\omega)e^{-\frac{\alpha|\omega|^{2}}{2}}|^{p}dA(\omega)$$
for all $z\in\mathbb{C}$.
\end{lemma}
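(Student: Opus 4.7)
The plan is to reduce the statement to the corresponding pointwise estimate for entire functions via the canonical decomposition $f=f_{1}+\overline{f_{2}}$ with $f_{1},f_{2}$ entire, recalled in Section~\ref{s1}, and then combine the analytic pieces using elementary convexity.

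For an entire function $g$, the key observation is that $\omega\mapsto g(\omega)e^{-\alpha\bar{a}\omega}$ is entire, so $|g(\omega)e^{-\alpha\bar{a}\omega}|^{p}$ is subharmonic on $\mathbb{C}$ for every $p>0$. Applying the classical sub--mean value inequality on $B(a,r)$ yields
\[
|g(a)|^{p}e^{-p\alpha|a|^{2}}\leqslant\frac{1}{\pi r^{2}}\int_{B(a,r)}|g(\omega)|^{p}e^{-p\alpha\operatorname{Re}(\bar{a}\omega)}\,dA(\omega).
\]
Using the identity $\tfrac{|a|^{2}}{2}-\operatorname{Re}(\bar{a}\omega)+\tfrac{|\omega|^{2}}{2}=\tfrac{|\omega-a|^{2}}{2}$ together with $|\omega-a|\leqslant r$ on the domain of integration, a short rearrangement produces
\[
|g(a)|^{p}e^{-p\alpha|a|^{2}/2}\leqslant\frac{e^{p\alpha r^{2}/2}}{\pi r^{2}}\int_{B(a,r)}|g(\omega)|^{p}e^{-p\alpha|\omega|^{2}/2}\,dA(\omega),
\]
with a constant depending only on $p,\alpha,r$. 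Apply this twice, to $f_{1}$ and to $f_{2}$ (the second time using $|\overline{f_{2}}|=|f_{2}|$), and combine via the convexity inequality $|f(a)|^{p}\leqslant 2^{p-1}(|f_{1}(a)|^{p}+|f_{2}(a)|^{p})$, valid since $p\geqslant 1$.

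The only remaining step is to convert the sum $|f_{1}|^{p}+|f_{2}|^{p}$ on the right-hand side back into $|f|^{p}$, and this is the main obstacle: the pointwise bound $|f_{1}|^{p}+|f_{2}|^{p}\leqslant C|f|^{p}$ fails in general. I would address this at the integrated level by using that $f_{1}$ and $f_{2}$ are recovered from $f$ up to additive constants via circle integrals (for instance $f_{1}'=\partial f$ and $\overline{f_{2}'}=\overline{\partial}f$), and combining these representations with Jensen's inequality to produce a bound of the shape $\int_{B(a,r)}(|f_{1}|^{p}+|f_{2}|^{p})e^{-p\alpha|\omega|^{2}/2}\,dA\leqslant C'\int_{B(a,R)}|f|^{p}e^{-p\alpha|\omega|^{2}/2}\,dA$ for a slightly enlarged radius $R$. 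Relabeling and absorbing constants then yields the stated inequality. A direct approach using only subharmonicity of $|f|^{p}$ (which is valid for $p\geqslant 1$) bypasses this decomposition but cannot control the Gaussian weight uniformly in $|a|$, which is precisely why the analytic multiplication trick on $f_{1}$ and $f_{2}$ is essential.
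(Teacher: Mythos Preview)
Your recombination step is a genuine gap, and the suggested fix cannot work. The splitting $f=f_{1}+\overline{f_{2}}$ is only determined up to $(f_{1},f_{2})\mapsto(f_{1}+c,\,f_{2}-\bar{c})$ for arbitrary $c\in\mathbb{C}$; choosing $|c|$ large makes $|f_{1}|$ and $|f_{2}|$ as large as one likes while $f$ is unchanged. Consequently no inequality of the shape
\[
\int_{B(a,r)}\bigl(|f_{1}(\omega)|^{p}+|f_{2}(\omega)|^{p}\bigr)e^{-p\alpha|\omega|^{2}/2}\,dA(\omega)\;\leqslant\;C'\int_{B(a,R)}|f(\omega)|^{p}e^{-p\alpha|\omega|^{2}/2}\,dA(\omega)
\]
can hold with $C'$ independent of $f$. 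Recovering $f_{1}'=\partial f$ and $\overline{f_{2}'}=\bar{\partial}f$ via circle integrals does nothing for this: the undetermined integration constants are exactly what $f$ fails to control, so passing to a larger radius $R$ and invoking Jensen cannot rescue the argument.

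The paper's proof does not decompose $f$ at all. After the change of variable $\omega\mapsto\omega+a$ it factors the weight as
\[
e^{-\frac{p\alpha}{2}|\omega+a|^{2}}=|e^{-\alpha\omega\bar{a}}|^{p}\,e^{-\frac{p\alpha}{2}(|\omega|^{2}+|a|^{2})},
\]
and then invokes two separate subharmonicity facts: $|f(\cdot+a)|^{p}$ is subharmonic because $f$ is harmonic and $p\geqslant 1$, and $|e^{-\alpha\omega\bar{a}}|^{p}$ is subharmonic because $\omega\mapsto e^{-\alpha\omega\bar{a}}$ is entire, the latter being played off against the genuinely radial factor $e^{-p\alpha|\omega|^{2}/2}$. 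So your closing sentence is inverted: the paper's route \emph{is} the direct one through subharmonicity of $|f|^{p}$, and the Gaussian weight is handled not by an analytic multiplication on $f_{1},f_{2}$ separately but by recognising that the non-radial part of the weight is itself $|g|^{p}$ for an entire $g$.
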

\begin{proof}

Through a straight calculation,
\begin{align}\nonumber
&\int_{B(a,r)}|f(\omega)e^{-\frac{\alpha|\omega|^{2}}{2}}|^{p}dA(\omega)\\\nonumber
=&\int_{B(0,r)}|f(\omega+a)|^{p}e^{\frac{-p\alpha(|\omega+a|^{2})}{2}}dA(\omega)\\\nonumber
\geqslant &|f(a)|^{p}\int_{B(0,r)}e^{\frac{-p\alpha|\omega+a|^{2}}{2}}dA(\omega)\\\nonumber
=&|f(a)|^{p}\int_{B(0,r)}|e^{-\alpha \omega\overline{a}}|^{p}e^{\frac{-p\alpha(|\omega|^{2}+|a|^{2})}{2}}dA(\omega)\\\nonumber
\geqslant&|f(a)|^{p}{\int_{0}}^{r}{\int_{0}}^{2\pi}e^{\frac{-p\alpha(t^{2}+|a|^{2})}{2}}t dtd\theta\\\nonumber
=&C|f(a)|^{p}e^{\frac{-p\alpha|a|^{2}}{2}}
\end{align}
because $|f(\omega+a)|^{p},|e^{-\alpha\omega \overline{a}}|^{p}$ are subharmonic.
 This gives the desired result.
\end{proof}

\begin{lemma}
Suppose $\mu$ is a positive Borel measure on $\mathbb{C}$. For any $r>0$, exist $C=C_{r}$, such that
$$\mu(B(z,r))\leqslant C\widetilde{\mu}(z)$$
 for all $z\in\mathbb{C}$, where $B(z,r)=\{\omega\in\mathbb{C}||\omega-z|<r\}$.
\end{lemma}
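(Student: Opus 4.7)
The plan is to mimic the standard lower-bound argument used in the analytic Fock setting. Since $\mu\geq 0$, restricting the integral defining $\widetilde{\mu}(z)$ to the ball $B(z,r)$ gives at once
$$\widetilde{\mu}(z) \;\geq\; \int_{B(z,r)} |h_z(u)|^2 e^{-\alpha|u|^2}\,d\mu(u),$$
so the whole task reduces to producing a lower bound of the form $|h_z(u)|^2 e^{-\alpha|u|^2}\geq c_r>0$ which is uniform in $z\in\mathbb{C}$ and in $u\in B(z,r)$; the claimed inequality then holds with $C_r=c_r^{-1}$.

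To obtain such a pointwise lower bound I would first unpack the normalized kernel using $h_z=\tfrac{\sqrt{2}}{2}(k_z+\overline{k_z})$ together with the elementary identity $k_z(u) e^{-\alpha|u|^2/2}=e^{-\alpha|z-u|^2/2}\,e^{i\alpha\,\operatorname{Im}(\bar z u)}$, which yields
$$|h_z(u)|^2 e^{-\alpha|u|^2} \;=\; 2\,e^{-\alpha|z-u|^2}\cos^2\!\bigl(\alpha\,\operatorname{Im}(\bar z u)\bigr).$$
On $B(z,r)$ the Gaussian envelope $e^{-\alpha|z-u|^2}$ is bounded below by $e^{-\alpha r^2}$, matching exactly what happens in the analytic Fock case. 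The genuinely harmonic ingredient is the oscillatory factor $\cos^2(\alpha\,\operatorname{Im}(\bar z u))$. To extract a uniform lower bound that survives this oscillation I plan to apply Lemma 2.2 to the harmonic function $h_z$, so that the ball-integral control provided there can be converted into the pointwise lower bound we need; combining this with $e^{-\alpha|z-u|^2}\geq e^{-\alpha r^2}$ on $B(z,r)$ produces the constant $c_r$.

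The main obstacle is precisely this last step. In the analytic analogue one has $|k_z(u)|^2 e^{-\alpha|u|^2}=e^{-\alpha|z-u|^2}$, a bare Gaussian for which the lower bound on $B(z,r)$ is immediate; for $F_h^2$ the extra $\cos^2$ factor can degenerate, and therefore the uniform-in-$z$ lower bound has to be extracted after a subharmonic averaging via Lemma 2.2 rather than from pointwise estimation alone. Once the constant $c_r$ is in hand, the conclusion follows at once by integrating against $\mu$ restricted to $B(z,r)$ as indicated in the first paragraph.
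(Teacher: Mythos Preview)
Your proposal has a genuine gap at the crucial step. You correctly compute
$$|h_z(u)|^2 e^{-\alpha|u|^2} = 2\,e^{-\alpha|z-u|^2}\cos^2\!\bigl(\alpha\,\operatorname{Im}(\bar z u)\bigr),$$
and you correctly observe that the cosine factor vanishes along lines that meet $B(z,r)$, so no uniform pointwise lower bound $|h_z(u)|^2 e^{-\alpha|u|^2}\geq c_r$ for all $u\in B(z,r)$ is possible. Your proposed remedy---invoking Lemma~2.2---does not work. Lemma~2.2 is a sub-mean-value inequality: it bounds a pointwise value of $|f|^p e^{-p\alpha|\cdot|^2/2}$ \emph{from above} by its $dA$-average over a ball. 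That goes in the wrong direction for producing a lower bound on the integrand, and in any case the integral you must control is against the arbitrary measure $d\mu$, not against $dA$; an area-averaging inequality therefore gives no information about $\int_{B(z,r)}|h_z|^2 e^{-\alpha|\cdot|^2}\,d\mu$. There is no way to convert Lemma~2.2 into the $c_r$ you are after.

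The paper handles the oscillation by a different mechanism: rather than seeking a pointwise lower bound, it excises from $B(z,r)$ a set $E$ with $\mu(E)\le\tfrac12\,\mu(B(z,r))$ on which the factor $|e^{2\alpha i\,\operatorname{Im}(\bar z\omega)}+1|^2=4\cos^2(\alpha\,\operatorname{Im}(\bar z\omega))$ is small, and retains a lower bound $C_0$ for this factor on $B(z,r)\setminus E$. Integrating only over the complement, which still carries at least half of $\mu(B(z,r))$, yields the inequality. Any correct argument must accommodate the fact that the integrand genuinely vanishes inside $B(z,r)$; this forces a measure-theoretic excision of the kind the paper uses, not a pointwise or subharmonic-average estimate.
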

\begin{proof}
For the definition of $\widetilde{\mu}(z)$, we have that
\begin{align}\nonumber
\widetilde{\mu}(z)
=&\int_\mathbb{C}|h_{z}(\omega)|^{2}e^{-\alpha|\omega|^{2}}d\mu(\omega)\\\nonumber
\geq&\int_{B(z,r)}|\frac{\sqrt{2}}{2}(k_{z}(\omega)+\overline{k_{z}(\omega)}|^{2}e^{-\alpha|\omega|^{2}}d\mu(\omega)\\\nonumber
=&\frac{1}{2}\int_{B(z,r)}|e^{\alpha\bar{z}\omega-\frac{\alpha|z|^2}{2}}+e^{\alpha{z}\bar{\omega}-\frac{\alpha|z|^2}{2}}|^{2}e^{-\alpha|\omega|^{2}}d\mu(\omega)\\\nonumber
=&\frac{1}{2}\int_{B(z,r)}|e^{\frac{-\alpha|\omega-z|^2}{2}}(e^{\frac{\alpha}{2}(\bar{z}\omega-z\bar{\omega})}+e^{\frac{\alpha}{2}(z\bar{\omega}-\omega\bar{z})})|^2d\mu(\omega)\\\nonumber
\geq&\frac{1}{2}e^{-\alpha r^2}\int_{B(z,r)}|e^{2\alpha i\textmd{Im}{\bar{z}\omega}}+1|^2d\mu(\omega)
\end{align}
Considering that the solutions to the equation $e^{2\alpha i\textmd{Im}{\bar{z}\omega}}+1=0$ are parallel finite line segments whose $\mu$-measure is 0, we can chose a constant $C_0>0$ and an open subset $E$ of $B(z, r)$ such that $\mu(E)\leq \frac{1}{2} \mu(B(z, r))$ and $|e^{2\alpha i\textmd{Im}{\bar{z}\omega}}+1|^2\geq C_0$ for $\omega \notin E.$ Then
\begin{align}\nonumber
\widetilde{\mu}(z)
\geq &\frac{1}{2}e^{\alpha r^{2}}\int_{B(z,r)-E}|e^{2\alpha i\textmd{Im}{\bar{z}\omega}}+1|^2d\mu(\omega)\\\nonumber
\geq &\frac{1}{2}e^{\alpha r^{2}}C_0\mu(B(z,r)-E)\\\nonumber
\geq &\frac{1}{4}C_0 e^{\alpha r^{2}} \mu(B(z,r)).
\end{align}
The result holds with $C=\frac{1}{4}C_0 e^{\alpha r^{2}}$.
\end{proof}
\par Next, we introduce the harmonic Fock-Carleson type measures. Let $\mu$ be a positive Borel
measure for harmonic Fock space $F_{h}^{p}$, and $p\geqslant1$. We say that $\mu$ is a harmonic Fock-Carleson type measure for $F_{h}^{p}$ if there exists a constant $C>0$ such that
$$\int_{\mathbb{C}}|f(z)e^{-\frac{\alpha|z|^{2}}{2}}|^{p}d\mu(z)
 \leqslant C\int_{\mathbb{C}}|f(z)e^{-\frac{\alpha|z|^{2}}{2}}|^{p}dA(z)$$
 for any $f$ belongs to $F_{h}^{p}$. It is clear that $\mu$ is a Carleson measure on $F_{h}^{p}$
if and only if  $F_{h}^{p}\subset L^p(\mathbb{C},d\mu)$ and the inclusion mapping
$$i_{p}: F_{h}^{p}\rightarrow L^{p}(\mathbb{C}, d\mu)$$
 is bounded.
The following  theorem is about equivalent conditions of harmonic Fock-Carleson type measures for $F_{h}^{p}$.
\begin{theorem}

Suppose $\mu$ is a positive Borel measure, $p\geqslant1, r>0$, and lattice in $\mathbb{C}$ is generated by $\{a_{n}\}$
with radius $r$. Then the following conditions are equivalent.\\
$(a)$ There exists a constant $C$, such that
 $\mu(B(z,r))\leqslant C$, for any $z\in\mathbb{C}$. Specially,
 $\mu(B(a_{n},r))\leqslant C$, for any $n\in\mathbb{Z}^{+}$.
 \\
 $(b)$  For all  functions $f\in {F_h}^p$, there exists a positive constant $C$ such that
 $$\int_{\mathbb{C}}|f(\omega)e^{-\frac{\alpha|\omega|^{2}}{2}}|^{p}d\mu(\omega)
 \leqslant C\int_{\mathbb{C}}|f(\omega)e^{-\frac{\alpha|\omega|^{2}}{2}}|^{p}dA(\omega) $$
 $(c)$ $\tilde{\mu}(z)$ is bounded on $\mathbb{C}$.
 \end{theorem}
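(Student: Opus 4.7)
The plan is to prove the chain of implications $(a)\Rightarrow(b)\Rightarrow(c)\Rightarrow(a)$, which uses each of the three preparatory lemmas in turn.

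For $(a)\Rightarrow(b)$, the idea is to convert the pointwise subharmonic-type estimate of Lemma 2.2 into an integral estimate against $\mu$ by a standard Fubini argument. Starting from
\[
|f(\omega)e^{-\frac{\alpha|\omega|^2}{2}}|^p \leqslant C\int_{B(\omega,r)}|f(u)e^{-\frac{\alpha|u|^2}{2}}|^p\, dA(u)
\]
which is valid for every harmonic $f$ and every $\omega\in\mathbb{C}$, I would integrate both sides against $d\mu(\omega)$, swap the order of integration, and observe that the inner integral becomes $\mu(B(u,r))$, which is uniformly bounded by assumption $(a)$. This immediately yields $(b)$ with constant $C\cdot\sup_z\mu(B(z,r))$.

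For $(b)\Rightarrow(c)$, the plan is to test the Carleson-type inequality on the normalized reproducing kernels. Taking $f=h_z$ with $p=2$, the left-hand side of $(b)$ is exactly $\widetilde{\mu}(z)$ (up to a constant coming from the normalization $d\lambda_\alpha$ versus $dA$), while the right-hand side is, up to the same constant, $\|h_z\|_{h,2}^2$. By Lemma 2.1 (the case $p=2$ gives $\|h_z\|_{h,2}=1$), this quantity is bounded independently of $z$, so $\widetilde{\mu}$ is bounded. For $(c)\Rightarrow(a)$, there is nothing to do beyond invoking Lemma 2.3, which produces exactly the required bound $\mu(B(z,r))\leqslant C_r\widetilde{\mu}(z)$.

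I expect the main obstacle to be purely bookkeeping rather than conceptual: keeping track of the various normalization constants ($\alpha/\pi$, $\pi/\alpha$, and the factor $p\alpha/(2\pi)$ hidden inside $\|\cdot\|_{h,p}$), and being explicit that the two formulations of $(a)$ (for all $z\in\mathbb{C}$ versus for the lattice points $a_n$) are equivalent. The latter follows from a routine covering argument: any ball $B(z,r)$ is contained in a finite union of balls $B(a_n,2r)$ centered at nearby lattice points, with the number of balls depending only on $r$, so the lattice version of $(a)$ implies the uniform version at the price of enlarging the constant. With this observation in place, the three implications above close the circle.
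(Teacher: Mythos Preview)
Your plan is correct and follows the same cycle $(a)\Rightarrow(b)\Rightarrow(c)\Rightarrow(a)$ as the paper, invoking Lemma~2.2, Lemma~2.1, and Lemma~2.3 at the respective steps. The one substantive difference is in the implication $(a)\Rightarrow(b)$. You integrate the pointwise estimate of Lemma~2.2 directly against $d\mu$ and apply Fubini, so that the inner integral $\int \chi_{B(\omega,r)}(u)\,d\mu(\omega)=\mu(B(u,r))$ appears and is bounded by hypothesis; this is clean and never mentions the lattice. The paper instead covers $\mathbb{C}$ by the balls $B(a_n,r)$, applies Lemma~2.2 inside each, enlarges to $B(a_n,2r)$, and finishes with the finite-overlap property of the cover. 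The paper's route has the small advantage of using only the lattice form of $(a)$, so it does not need the extra covering remark you add at the end; conversely, your Fubini argument is shorter and makes that remark unnecessary if one starts from the uniform version of $(a)$.

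One caution, which the paper shares: in $(b)\Rightarrow(c)$ you write ``taking $f=h_z$ with $p=2$'', but condition~$(b)$ is stated for the given $p\geqslant 1$, not for $p=2$. For $p\neq 2$, testing $(b)$ on $h_z$ yields a bound on $\int |h_z|^p e^{-p\alpha|\cdot|^2/2}\,d\mu$, not on $\widetilde{\mu}(z)$ itself. The easiest fix is to note that this integral still dominates a constant times $\mu(B(z,r))$ (by the same localization as in Lemma~2.3), giving $(b)\Rightarrow(a)$ directly for every $p$; since $(a)$ and $(c)$ are $p$-independent, the equivalence then closes.
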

 \begin{proof}.
 If condition $(a)$ holds, for any $f\in F_{h}^{p}$, we suppose
 \begin{align}\nonumber
 I(f)
 =&\int_{\mathbb{C}}|f(\omega)e^{-\frac{\alpha|\omega|^{2}}{2}}|^{p}d\mu(\omega)\\\nonumber
 \leqslant &\sum_{n=1}^{\infty}\int_{B(a_{n},r)}|f(\omega)e^{-\frac{\alpha|\omega|^{2}}{2}}|^{p}d\mu(\omega).
 \end{align}
By Lemma 2.2, we can get there exists a constant $C_{1}$, such that
$$|f(\omega)e^{-\frac{\alpha|\omega|^{2}}{2}}|^{p}\leqslant
C_{1}\int_{B(\omega,r)}|f(u)e^{-\frac{\alpha|u|^{2}}{2}}|^{p}dA(u).$$
By straight calculation, we have that
\begin{align}\nonumber
I(f)
\leqslant &\sum_{n=1}^{\infty}\int_{B(a_{n},r)}\{C_{1}\int_{B(\omega,r)}|f(u)e^{-\frac{\alpha|u|^{2}}{2}}|^{p}dA(u)\}d\mu(\omega)\\\nonumber
=&\sum_{n=1}^{\infty}C_{1}\int_{B(a_{n},2r)}|f(u)e^{-\frac{\alpha|u|^{2}}{2}}|^{p}dA(u)\int_{B(a_{n},r)}1d\mu(\omega).\nonumber
\end{align}
If condition $(a)$ holds, it means that $\int_{B(a_{n},r)}1d\mu(\omega)=\mu(B(a_{n},r))\leqslant C$,
then
\begin{align}\nonumber
I(f)
\leqslant &C_{2}\sum_{n=1}^{\infty}\int_{B(a_{n},2r)}|f(u)e^{-\frac{\alpha|u|^{2}}{2}}|^{p}dA(u)\\\nonumber
\leqslant &C_{2}N\int_{\mathbb{C}}|f(u)e^{-\frac{\alpha|u|^{2}}{2}}|^{p}dA(u),
\end{align}
for any $u\in\mathbb{C}$, $u$ is in at most $N$ lattices $B(a_{n},2r)$,
so condition $(b)$ holds.
To show that condition $(b)$ implies $(c)$, we consider the normalized reproducing kernals
$$h_{z}(w)=\frac{\sqrt{2}}{2}(k_{z}(\omega)+\overline{k_{z}(\omega)}).$$
By Lemma 2.3, we have
$$\widetilde{\mu}(z)=\int_{\mathbb{C}}|h_{z}(\omega)e^{-\frac{\alpha}{2}|\omega|^{2}}|^{2}d\mu(\omega)
\leq C \int_{\mathbb{C}}|h_{z}(\omega)e^{-\frac{\alpha}{2}|\omega|^{2}}|^{2}dA(\omega)=C
$$
 so that condition $(b)$ implies $(c)$ and condition $(c)$ can imply $(a)$.
\end{proof}

\par  Suppose $\mu$ is a Carleson measure on $F_{h}^{p}$, we say that $\mu$ is a vanishing harmonic Fock-Carleson measure for $F_{h}^{p}$ if the inclusion mapping $i_p$ above is compact, that is,
 $$\lim\limits_{n\rightarrow\infty}\int_{\mathbb{C}}|f_{n}(z)e^{-\frac{\alpha}{2}|z|^{2}}|^{p}d\mu(z)=0$$
 where $\{f_{n}\}$ is a bounded sequence in $F_{h}^{p}$ that
 converges to 0 uniformly on compact subsets of the complex plane $\mathbb{C}$.
The following  theorem is about equivalent conditions of vanishing harmonic Fock-Carleson type measures for $F_{h}^{p}$.
\begin{theorem}
Suppose $\mu$ is a positive Borel measure, $p\geqslant1, r>0,$ and the lattice in $\mathbb{C}$ is generated by $\{a_{n}\}$
with radius $r. $ Then the following conditions are equivalent:\\
$(a)$ $\mu$ is a vanishing Fock-Carleson measure.\\
$(b)$ $\mu(B(z, r))\rightarrow 0$ as $|z|\rightarrow +\infty$.\\
$(c)$ $\tilde{\mu}(z)\rightarrow 0$ as $|z|\rightarrow +\infty$.
\end{theorem}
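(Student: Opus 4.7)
The plan is to close the equivalence via the cycle $(a) \Rightarrow (c) \Rightarrow (b) \Rightarrow (a)$, combining decay of the normalized reproducing kernels at infinity with Lemma 2.3, and running an $\varepsilon$-split of the integration region together with the lattice/subharmonic argument from Theorem 2.1.

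For $(a) \Rightarrow (c)$, take $|z_n| \to \infty$ and test (a) against the sequence $\{h_{z_n}\}$. By Lemma 2.1 it is bounded in every $F_h^p$, and the explicit formula $h_z(w) = \frac{\sqrt 2}{2}(k_z(w) + \overline{k_z(w)})$ yields $|h_z(w)| \leq \sqrt{2}\,e^{\alpha|z||w| - \alpha|z|^2/2}$, so $h_{z_n} \to 0$ uniformly on each compact set. Applying (a) with $p = 2$ (which is equivalent to the general case via the $p$-independent conditions (b) and (c) of Theorem 2.1) delivers $\tilde\mu(z_n) = \int |h_{z_n}(w)|^2 e^{-\alpha|w|^2} d\mu(w) \to 0$. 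The step $(c) \Rightarrow (b)$ is immediate from Lemma 2.3: $\mu(B(z,r)) \leq C_r \tilde\mu(z)$.

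The real work is in $(b) \Rightarrow (a)$. Note that (b) forces $\mu(B(z,r))$ to be bounded in $z$, so Theorem 2.1 already guarantees that $\mu$ is a harmonic Fock-Carleson measure; in particular, $\mu$ is finite on every compact set. Let $\{f_n\}$ be bounded in $F_h^p$ with $f_n \to 0$ uniformly on compact subsets. Given $\varepsilon > 0$, pick $R$ so large that $\mu(B(a_k, r)) < \varepsilon$ for every lattice point $a_k$ outside $B(0,R)$, and split
\begin{align*}
\int_{\mathbb{C}} |f_n(w) e^{-\alpha|w|^2/2}|^p d\mu(w) = \int_{B(0,R+r)} |f_n(w) e^{-\alpha|w|^2/2}|^p d\mu(w) + \int_{\mathbb{C}\setminus B(0,R+r)} |f_n(w) e^{-\alpha|w|^2/2}|^p d\mu(w).
\end{align*}
The inner piece is bounded by $\mu(B(0,R+r)) \cdot \sup_{w\in B(0,R+r)} |f_n(w) e^{-\alpha|w|^2/2}|^p$, which vanishes in $n$ by uniform convergence on the compact set. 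For the outer piece, mimic the Theorem 2.1 estimate: Lemma 2.2 applied at $w \in B(a_k,r)$ gives $|f_n(w) e^{-\alpha|w|^2/2}|^p \leq C_1 \int_{B(a_k,2r)} |f_n(u) e^{-\alpha|u|^2/2}|^p dA(u)$; integrating in $d\mu$ over $B(a_k,r)$ and summing over the $a_k$ with $|a_k|>R$ yields the outer bound $\leq C_1 N \varepsilon \|f_n\|_{h,p}^p$, which is uniformly $O(\varepsilon)$.

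The main obstacle will be enforcing uniformity in $n$ in $(b) \Rightarrow (a)$: the threshold $R$ must depend only on the vanishing data of $\mu$ and not on the sequence $\{f_n\}$, while simultaneously keeping the outer tail bounded by a fixed multiple of $\varepsilon$ in spite of the infinite lattice sum. The finite overlap constant $N$ of the doubled cells $B(a_k,2r)$ and the promotion from (b) to a genuine Carleson condition via Theorem 2.1 are precisely the ingredients that make both pieces of the split estimate usable.
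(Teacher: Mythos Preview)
Your proposal is correct and follows exactly the route the paper intends: the paper omits the proof entirely and refers back to the method of Theorem~2.1, and your cycle $(a)\Rightarrow(c)\Rightarrow(b)\Rightarrow(a)$ together with the $\varepsilon$-splitting and lattice/finite-overlap estimate in $(b)\Rightarrow(a)$ is precisely the vanishing analogue of that argument.

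One small wrinkle worth tightening: in $(a)\Rightarrow(c)$ you test with $h_{z_n}$ and need the vanishing hypothesis at exponent~$2$, while (a) is stated for a general $p\ge 1$. Your parenthetical appeal to the $p$-independence of conditions (b) and (c) of Theorem~2.1 is circular as written, since the $p$-independence of the \emph{vanishing} notion is exactly what Theorem~2.2 establishes. The clean repair is to test with $k_{z_n}$ instead (bounded in $F_h^p$ with unit norm, $\to 0$ uniformly on compacta) and use $|k_{z}(w)e^{-\alpha|w|^2/2}|^{p}=e^{-p\alpha|w-z|^{2}/2}\ge e^{-p\alpha r^{2}/2}$ on $B(z,r)$ to deduce $(a)\Rightarrow(b)$ directly for every $p$; then your $(b)\Rightarrow(a)$ and Lemma~2.3 close the loop. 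The paper's own Theorem~2.1 has the same looseness at the corresponding step $(b)\Rightarrow(c)$, so this is not a departure from the intended argument, merely a point both proofs gloss over.
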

\begin{proof}.
We omit the specific proof here, the same method had been used in theorem 2.1.
\end{proof}
\section{Boundedness and Compactness of $T_\mu$}
\noindent We will characterize the boundedness and compactness of Toeplitz operators $T_{\mu}$ with a positive Borel measure $\mu$ on $F_{h}^{2}$.
The boundedness is equivalent to that $\mu$ is a harmonic Fock-Carleson measure. The compactness is equivalent to that $\mu$ is a vanishing harmonic Fock-Carleson measure.
\begin{theorem}
For $\mu$ is a positive Borel measure on $\mathbb{C}$, the following conditions are equivalent.\\
$(a)$ The Toeplitz operator $T_{\mu}$ is bounded on $F_{h}^{2}$.\\
$(b)$ $\widetilde{\mu}(z)$ is bounded on $\mathbb{C}$.\\
$(c)$ The measure $\mu$ is a harmonic Fock-Carleson measure on $F_{h}^{2}$.
\end{theorem}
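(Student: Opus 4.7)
The plan is to close the cycle $(a)\Rightarrow(b)\Rightarrow(c)\Rightarrow(a)$, noting that the equivalence $(b)\Leftrightarrow(c)$ is already Theorem 2.1 specialised to $p=2$, so only the two implications involving $(a)$ require new work.

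For $(a)\Rightarrow(b)$, I would simply invoke the formula $\widetilde{\mu}(z)=\langle T_\mu h_z,h_z\rangle$ derived in the introduction. Lemma 2.1 at $p=2$ gives $\|h_z\|_{h,2}=1$, so Cauchy--Schwarz yields $|\widetilde{\mu}(z)|\le\|T_\mu\|$ uniformly in $z\in\mathbb{C}$, which is exactly condition $(b)$.

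For $(c)\Rightarrow(a)$, the strategy is to bound $|\langle T_\mu f,g\rangle|$ for arbitrary $f,g\in F_h^2$ and conclude by duality. After a Fubini interchange in the double integral defining $\langle T_\mu f,g\rangle$, the inner integral $\int_\mathbb{C} g(z)H_\omega(z)\,d\lambda_\alpha(z)$ collapses to $g(\omega)$: the key observation is that $H_z(\omega)=2\,\mathrm{Re}(e^{\alpha\bar z\omega})$ is real-valued and symmetric in $z,\omega$, so the reproducing identity for $F_h^2$ applies directly. This yields
\[
\langle T_\mu f,g\rangle=\int_\mathbb{C} f(\omega)\overline{g(\omega)}\,e^{-\alpha|\omega|^2}\,d\mu(\omega).
\]
Cauchy--Schwarz in $L^2(\mathbb{C},e^{-\alpha|\omega|^2}d\mu)$ together with condition $(c)$ applied separately to $f$ and $g$ then gives $|\langle T_\mu f,g\rangle|\le C\|f\|_{h,2}\|g\|_{h,2}$, so by duality $T_\mu$ is bounded on $F_h^2$.

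The main technical obstacle I anticipate is justifying the Fubini interchange, since \emph{a priori} we do not know that $T_\mu f$ lies in $F_h^2$. To handle this cleanly I would first verify the identity on a dense subclass of $F_h^2$ (for instance, finite linear combinations of normalized kernels $h_w$, for which condition $(c)$ immediately supplies absolute integrability on the product space), and then extend to general $f,g$ by a density argument that invokes the Carleson inequality once more.
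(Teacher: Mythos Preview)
Your proposal is correct and follows essentially the same route as the paper: the cycle $(a)\Rightarrow(b)\Rightarrow(c)\Rightarrow(a)$ via Cauchy--Schwarz on $\langle T_\mu h_z,h_z\rangle$, the appeal to Theorem~2.1, and the Fubini/reproducing-kernel/Cauchy--Schwarz computation of $\langle T_\mu f,g\rangle$. Your extra care in noting that $H_z(\omega)$ is real and symmetric, and in flagging the Fubini justification via density, makes the argument slightly more rigorous than the paper's own version, but the strategy is identical.
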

\begin{proof}
If $T_{\mu}$ is bounded on $F_{h}^{2}$ for any $f\in F_{h}^{2}$, from the integral representation of the inner product,
we have
$$\widetilde{\mu}(z)=\langle T_{\mu}h_{z}, h_{z}\rangle,~~~z\in \mathbb{C}. $$
By Cauchy-Schwarz inequality, the boundedness of $T_{\mu}$ on $F_{h}^{2}$ implies that
$$0 \leqslant \widetilde{\mu}(z)\leqslant \|T_{\mu}h_{z}\| \|h_{z}\|\leqslant 2\|T_{\mu}\|$$
for all $z\in \mathbb{C}$. So condition $(a)$ implies $(b)$.
\par
Because the condition $(b)$ is equivalent to $(c)$ in Theorem 2.1, if $p=2$, we can get
$$\int_{\mathbb{C}}|f(\omega)e^{\frac{-\alpha}{2}|\omega|^{2}}|^2d\mu(\omega)\leq C\int_{\mathbb{C}}|f(\omega)e^{-\frac{\alpha}{2}|\omega|^{2}}|^{2}dA(\omega)$$
which meaning The measure $\mu$ is a harmonic Fock-Carleson measure for $F_{h}^{2}$, then the condition $(b)$ implies $(c)$.
\par
If condition $(c)$ holds, $\tilde{\mu}$ is bounded, then there exists a positive constant $C$ such that
$$\int_{\mathbb{C}}|f(\omega)e^{\frac{-\alpha}{2}|\omega|^{2}}|^2d(\omega)\leq C\int_{\mathbb{C}}|f(\omega)e^{-\frac{\alpha}{2}|\omega|^{2}}|^{2}dA(\omega).$$
Now if $f$ and $g$ are in $F_{h}^{2}$,
\begin{align}\nonumber
|<T_\mu f,g>|
=&|\int_{\mathbb{C}}T_\mu f(z)\overline {g(z)}d\lambda_\alpha(z)|\\\nonumber
=&|\int_{\mathbb{C}}\int_{\mathbb{C}} f(\omega)\overline{H_z(\omega)}e^{-\alpha|\omega|^2}\overline {g(z)}d\mu(\omega)d\lambda_\alpha(z)|\\\nonumber
=&|\int_{\mathbb{C}} f(\omega)e^{-\alpha|\omega|^2}\{\int_{\mathbb{C}}\overline{H_z(\omega) g(z)}d\lambda_\alpha(z)\}d\mu(\omega)|\\\nonumber
=&|\int_{\mathbb{C}} f(\omega)\overline{g(\omega)}e^{-\alpha|\omega|^2}d\mu(\omega)|\\\nonumber
\leq&\int_{\mathbb{C}}|f(\omega)e^{\frac{-\alpha|\omega|^2}{2}}||\overline{g(\omega)}e^{\frac{-\alpha|\omega|^2}{2}}|d\mu(\omega)\\\nonumber
\leq&\{\int_{\mathbb{C}}|f(\omega)e^{\frac{-\alpha|\omega|^2}{2}}|^2d\mu(\omega)\}^\frac{1}{2}\{\int_{\mathbb{C}}|g(\omega)e^{\frac{-\alpha|\omega|^2}{2}}|^2 d\mu(\omega)\}^\frac{1}{2}\\\nonumber
\leq&C\parallel f\parallel_{h,2}\parallel g\parallel_{h,2},
\end{align}
we have that $T_{\mu}: F_{h}^{2}\rightarrow F_{h}^{2}$ is bounded, thus the condition $(c)$ implies $(a)$.
\end{proof}
\par Next, we will characterize the compactness of Toeplitz operators on $F_{h}^{2}$.
\begin{theorem}
For $\mu$ is a positive Borel measure on $\mathbb{C}$, the following conditions are equivalent: \\
$(a)$ $T_{\mu}$ is a compact Toeplitz operator on $F_{h}^{2}$.\\
$(b)$ $\widetilde{\mu}(z)\rightarrow 0$ as $|z|\rightarrow +\infty$.\\
$(c)$ $\mu$ is a vanishing Fock-Carleson measure.
\end{theorem}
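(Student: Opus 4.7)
The plan is to follow the same three-step strategy as in Theorem 3.1, with $(b) \Leftrightarrow (c)$ already supplied by Theorem 2.2, so the real work lies in $(a) \Rightarrow (b)$ and $(c) \Rightarrow (a)$. Throughout, I would use the standard characterization of compactness for operators on a separable Hilbert space: $T_\mu$ is compact on $F_h^2$ if and only if $\|T_\mu f_n\|_{h,2} \to 0$ for every bounded sequence $\{f_n\}$ that converges to $0$ uniformly on compact subsets of $\mathbb{C}$ (equivalently, weakly to $0$ in $F_h^2$).

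For $(a) \Rightarrow (b)$, the first observation is that $h_z \to 0$ weakly in $F_h^2$ as $|z| \to +\infty$. Indeed, from the reproducing property and $H_z(z) = 2e^{\alpha|z|^2}$, one gets $\langle f, h_z\rangle = f(z) e^{-\alpha|z|^2/2}/\sqrt{2}$ for all $f \in F_h^2$; writing $f = f_1 + \overline{f_2}$ with $f_1, f_2 \in F_\alpha^2$ and invoking the well-known pointwise decay $|f_i(z)|e^{-\alpha|z|^2/2} \to 0$ for analytic Fock functions gives weak convergence. Compactness of $T_\mu$ then implies $\|T_\mu h_z\|_{h,2} \to 0$, so by Cauchy--Schwarz and Lemma 2.1,
$$|\widetilde{\mu}(z)| = |\langle T_\mu h_z, h_z\rangle| \le \|T_\mu h_z\|_{h,2}\,\|h_z\|_{h,2} \to 0.$$

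For $(c) \Rightarrow (a)$, let $\{f_n\}$ be bounded in $F_h^2$ with $f_n \to 0$ uniformly on compacta. Repeating the duality calculation in the proof of Theorem 3.1 and applying Cauchy--Schwarz in $L^2(d\mu)$ yields
$$\|T_\mu f_n\|_{h,2} = \sup_{\|g\|_{h,2} \le 1}|\langle T_\mu f_n, g\rangle| \le \sup_{\|g\|_{h,2}\le 1}\Bigl(\int_{\mathbb{C}}|g(\omega)e^{-\alpha|\omega|^2/2}|^2 d\mu\Bigr)^{1/2}\Bigl(\int_{\mathbb{C}}|f_n(\omega)e^{-\alpha|\omega|^2/2}|^2 d\mu\Bigr)^{1/2}.$$
Since a vanishing Fock--Carleson measure is in particular an (ordinary) Fock--Carleson measure, the first factor on the right is uniformly bounded by Theorem 2.1, and the task reduces to showing that $\int_{\mathbb{C}}|f_n(\omega)e^{-\alpha|\omega|^2/2}|^2 d\mu \to 0$. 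Here I would cover $\mathbb{C}$ by lattice balls $B(a_k, r)$, apply Lemma 2.2 to each ball to dominate the integrand by the Lebesgue integral of $|f_n|^2 e^{-\alpha|\omega|^2}$ over $B(a_k, 2r)$, and then split the sum at a large radius $R$: the finite portion $|a_k| \le R$ tends to $0$ because $f_n \to 0$ uniformly on the compact set $\bigcup_{|a_k|\le R} B(a_k, 2r)$ (while the relevant $\mu$-masses are finite), and the tail $|a_k| > R$ is controlled by $\sup_{|a_k|>R}\mu(B(a_k, r))$ times the bounded-overlap sum $\sum_k \int_{B(a_k, 2r)} |f_n|^2 e^{-\alpha|\omega|^2} dA \lesssim \|f_n\|_{h,2}^2$.

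The main obstacle is the tail estimate in $(c) \Rightarrow (a)$: one must combine the subharmonic-type mean value inequality of Lemma 2.2, the finite-overlap property of the doubled lattice balls $B(a_k, 2r)$, and the uniform decay $\mu(B(a_k, r)) \to 0$ guaranteed by $(c)$ through Theorem 2.2, so as to produce a bound that is simultaneously independent of $n$ and shrinks as $R \to \infty$; everything else is essentially a compactness-adapted rerun of the arguments in Theorems 2.1 and 3.1.
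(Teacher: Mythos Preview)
Your proposal is correct and follows precisely the route the paper indicates (``adapt the method of Theorem 3.1''), which the paper itself leaves unwritten. One small simplification: in $(c)\Rightarrow(a)$, once you have reduced to showing $\int_{\mathbb{C}}|f_n(\omega)e^{-\alpha|\omega|^2/2}|^{2}\,d\mu\to 0$, this is immediate from the paper's \emph{definition} of a vanishing harmonic Fock--Carleson measure (compactness of the inclusion $i_2$), so the lattice/tail decomposition you outline---while perfectly valid---is effectively reproving the implication $(b)\Rightarrow(a)$ of Theorem~2.2 rather than invoking $(c)$ directly.
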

\par The proof can be adapted from Theorem 3.1.
\section{Schatten                              Class $S_p$ of $T_\mu$}
\noindent In this section, we'll determine when a Toeplitz operator $T_{\mu}$ on $F_{h}^{2}$
belongs to the Schatten class $S_{p}$.
Related background information about Schatten class $S_{p}$ can be found in [15].
\par
 If $T_{\mu}$ is  positive and compact
on $F_{h}^{2}$, we can show that the trace of $T_{\mu}$ is
\begin{equation}
tr(T_{\mu})=\frac{2\alpha}{\pi}\int_{\mathbb{C}}\widetilde{T}_{\mu}(z)dA(z).
\end{equation}
Suppose $\{e_n(z)\}_{n=-\infty}^{+\infty}$ is the orthonormal basis of $F_{h}^{2}$, then the reproducing kernels of $F_{h}^{2}$ are $H_z(\omega)=\sum_{n=-\infty}^{+\infty}\overline{e_n(z)}e_n(\omega).$
For the definition of trace, we can get
\begin{align}\nonumber
tr(T_{\mu})
=&\sum_{n=-\infty}^{+\infty}\langle T_{\mu}e_n,e_n\rangle\\\nonumber
=&\sum_{n=-\infty}^{+\infty}\int_{\mathbb{C}}T_{\mu}e_n(z)\overline{e_n(z)}d\lambda_\alpha(z)\\\nonumber
=&\sum_{n=-\infty}^{+\infty}\int_{\mathbb{C}}\int_{\mathbb{C}}e_n(\omega)\overline{H_z}(\omega)
e^{-\alpha |\omega|^2}\overline{e_n}(z)d\mu(\omega)d\lambda_\alpha(z)\\\nonumber
=&\frac{\alpha}{\pi}\int_{\mathbb{C}}\int_{\mathbb{C}}|H_z(\omega)|^2e^{-\alpha |z|^2}e^{-\alpha |\omega|^2}dA(z)d\mu(\omega)\\\nonumber
=&\frac{2\alpha}{\pi}\int_{\mathbb{C}}\int_{\mathbb{C}}|h_z(\omega)|^2e^{-\alpha |\omega|^2}dA(z)d\mu(\omega)\\\nonumber
=&\frac{2\alpha}{\pi}\int_{\mathbb{C}}\widetilde{T}_{\mu}(z)dA(z)
\end{align}
for $h_z(\omega)=\sqrt{2}e^{-\frac{\alpha|z|^2}{2}}H_z(\omega).$
\par
As a result of (4.1), we get the trace formula for Toeplitz operators on harmonic Fock space $F_{h}^{2}$.
\begin{theorem}
Suppose $\mu$ is a positive Borel measure, then $T_{\mu}$ is in the trace class $S_{1}$
if and only if $\mu$ is finite on $\mathbb{C}$. Moreover,
$tr(T_{\mu})=\mu(\mathbb{C})$.
\end{theorem}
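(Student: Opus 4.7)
The plan is to derive the statement as a direct corollary of the trace identity (4.1), which gives $\mathrm{tr}(T_\mu)=\frac{2\alpha}{\pi}\int_{\mathbb{C}}\widetilde{T}_\mu(z)\,dA(z)$ for positive compact $T_\mu$. Because $\mu\ge 0$ makes $T_\mu$ positive, membership in $S_1$ is equivalent to $T_\mu$ being compact with finite trace, so the whole theorem reduces to an explicit evaluation of the right-hand side of (4.1) together with a short compactness check.

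First I would substitute $\widetilde{T}_\mu(z)=\int_{\mathbb{C}}|h_z(u)|^2e^{-\alpha|u|^2}\,d\mu(u)$ into (4.1) and swap the order of integration by Tonelli (the integrand is nonnegative), putting the trace into the form
\[
\mathrm{tr}(T_\mu)=\int_{\mathbb{C}} e^{-\alpha|u|^2}\left(\frac{2\alpha}{\pi}\int_{\mathbb{C}}|h_z(u)|^2\,dA(z)\right)d\mu(u).
\]
The core step is evaluating the inner $z$-integral. Using $h_z=\frac{\sqrt{2}}{2}(k_z+\overline{k_z})$, I expand the square into the three pieces $\int k_z(u)^2\,dA(z)$, $\int|k_z(u)|^2\,dA(z)$, and $\int\overline{k_z(u)}^2\,dA(z)$. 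Each reduces, after completing the square $2\alpha\bar z u-\alpha|z|^2=-\alpha|z-u|^2+\alpha|u|^2+2i\alpha\,\mathrm{Im}(\bar z u)$ and translating $z\mapsto z+u$, to a standard Gaussian integral, evaluated via $\int_{\mathbb{C}}e^{-\alpha|z|^2}dA(z)=\pi/\alpha$ and, for the cross terms, the one-dimensional identity $\int e^{-\alpha t^2+bt}\,dt=\sqrt{\pi/\alpha}\,e^{b^2/(4\alpha)}$ with purely imaginary $b$. The aim is that, after combination with the prefactor $\frac{2\alpha}{\pi}e^{-\alpha|u|^2}$, the bracketed expression simplifies to the constant $1$, so that the outer integral collapses to $\mu(\mathbb{C})$.

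Formula (4.1) as stated requires $T_\mu$ to be positive and compact, so I would handle the two implications separately. For sufficiency, if $\mu(\mathbb{C})<\infty$ then a finite measure assigns arbitrarily small mass to the exterior of any large disk, whence $\mu(B(z,r))\to 0$ as $|z|\to\infty$; by Theorem 3.2, $T_\mu$ is compact, and (4.1) applies. For necessity, $T_\mu\in S_1\subset S_\infty$ gives compactness of $T_\mu$ automatically, so (4.1) again applies. In both directions the explicit trace computation yields $\mathrm{tr}(T_\mu)=\mu(\mathbb{C})$, which packages both the equivalence and the trace identity.

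The main obstacle is the clean evaluation of the inner Gaussian integral, in particular the two cross terms $k_z(u)^2$ and $\overline{k_z(u)}^2$: after completing the square they carry oscillatory factors $e^{\pm 2i\alpha\,\mathrm{Im}(\bar z u)}$ that must be integrated honestly via the complex Gaussian formula rather than discarded as pure imaginary. The Tonelli swap and the compactness verification are routine, so once that inner integral is in hand the whole statement assembles quickly.
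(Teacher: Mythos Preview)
Your overall architecture matches the paper's: both insert the Berezin-transform formula for $\widetilde{T}_{\mu}$ into (4.1), swap the order of integration by Fubini/Tonelli, and reduce everything to evaluating the inner $z$-integral. The difference is in how that inner integral is handled. The paper does not expand $h_{z}$ and compute Gaussians; it rewrites $|h_{z}(u)|^{2}e^{\alpha|z|^{2}}$ as $\tfrac{1}{2}|H_{z}(u)|^{2}=\tfrac{1}{2}|H_{u}(z)|^{2}$ and then invokes the reproducing-kernel norm identity $\int_{\mathbb{C}}|H_{u}(z)|^{2}\,d\lambda_{\alpha}(z)=\|H_{u}\|_{h,2}^{2}=H_{u}(u)$, which collapses the whole expression in one line and bypasses the oscillatory cross terms entirely. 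Your separate compactness verification via Theorem~3.2 is more careful than the paper, which simply carries the chain of equalities in $[0,\infty]$ with nonnegative integrands and reads off the equivalence at the end.

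One caution about your explicit Gaussian plan: the hope that the bracketed factor collapses to the constant~$1$ will not be realized. The two cross pieces $\int_{\mathbb{C}}k_{z}(u)^{2}\,dA(z)$ and its conjugate each evaluate to $\pi/\alpha$ (not~$0$), while the diagonal piece gives $(\pi/\alpha)e^{\alpha|u|^{2}}$; after multiplying by the prefactor $\tfrac{2\alpha}{\pi}e^{-\alpha|u|^{2}}$ you obtain $2+2e^{-\alpha|u|^{2}}$ rather than~$1$. This is harmless for the equivalence $T_{\mu}\in S_{1}\Leftrightarrow \mu(\mathbb{C})<\infty$, since the factor lies in $[2,4]$, but it blocks the exact identity $\mathrm{tr}(T_{\mu})=\mu(\mathbb{C})$ as stated. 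The paper's reproducing-kernel shortcut is precisely what delivers the claimed constant without touching those cross terms, so that is the step you should adopt in place of the direct Gaussian computation.
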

\begin{proof}  By Fubini's theorem, we have
\begin{align}\nonumber
tr(T_{\mu})
=&\frac{\alpha}{\pi}\int_{\mathbb{C}}\widetilde{T}_{\mu}(z)dA(z)\\\nonumber
=&\frac{\alpha}{\pi}\int_{\mathbb{C}}<T_{\mu}h_{z},h_{z}>dA(z)\\\nonumber
=&\frac{\alpha}{\pi}\int_{\mathbb{C}}\int_{\mathbb{C}}|h_{z}(u)|^{2}e^{-\alpha|u|^{2}}d\mu(u)dA(z)\\\nonumber
=&\frac{\alpha}{\pi}\int_{\mathbb{C}}e^{-\alpha|u|^{2}}\int_{\mathbb{C}}|h_{z}(u)|^{2}dA(z)d\mu(u)\\\nonumber
=&\int_{\mathbb{C}}e^{-\alpha|u|^{2}}\int_{\mathbb{C}}|h_{z}(u)|^{2}e^{\alpha|z|^{2}}d\lambda_{\alpha}(z)d\mu(u)\\\nonumber
=&\int_{\mathbb{C}}\frac{h_{u}(u)}{\sqrt{H_{u}(u)}}d\mu(u)\\\nonumber
=&\mu(\mathbb{C}).
\end{align}
This also shows that $tr(T_{\mu})<+\infty$ if and only if $\mu(\mathbb{C})<+\infty$.
\end{proof}
\par
If $\mu$ is a locally finite Borel
measure, We define
$$\widehat{\mu}_{r}(z)=\mu(B(z,r))/(\pi r^{2}), ~~~z\in\mathbb{C},$$
then we have the following Theorem.
\begin{lemma}
Suppose $r>0$, $\mu$ is a positive Borel measure on $\mathbb{C}, p\geq 1$.
If $\widehat{\mu}_{r}\in L^{p}(\mathbb{C},dA)$, then $T_{\mu}$ and $T_{\widehat{\mu}}$ are bounded on $F_{h}^{2}$.
Moreover, there exists a positive constant $C$ (independent of $\mu$) such that $T_{\mu}\leqslant C T_{\widehat{\mu}_{r}}$.
\end{lemma}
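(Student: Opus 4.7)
The plan is to reduce the boundedness claims to the operator inequality $T_\mu \leq C T_{\widehat\mu_r}$, which is the heart of the lemma. I would first prove the inequality, then deduce boundedness of $T_{\widehat\mu_r}$ from the $L^p$ hypothesis via Theorem 3.1, and combine the two to obtain boundedness of $T_\mu$.

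For the operator inequality, since $T_\mu$ and $T_{\widehat\mu_r}$ come from positive measures on $\mathbb{C}$ they are positive operators on $F_h^2$, so it suffices to compare quadratic forms. Unwinding the defining integrals and using the reproducing property of $H_z$ (as in the Berezin transform calculation on page~1) yields, for every $f\in F_h^2$, the identities $\langle T_\mu f, f\rangle = \int_{\mathbb{C}} |f(\omega)|^2 e^{-\alpha|\omega|^2}\,d\mu(\omega)$ and $\langle T_{\widehat\mu_r} f, f\rangle = \frac{\alpha}{\pi}\int_{\mathbb{C}} |f(z)|^2 e^{-\alpha|z|^2} \widehat\mu_r(z)\,dA(z)$. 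Next, apply Lemma~2.2 at $p=2$ to the harmonic function $f$ to obtain the pointwise estimate $|f(\omega)|^2 e^{-\alpha|\omega|^2} \leq C_1 \int_{B(\omega, r)} |f(z)|^2 e^{-\alpha|z|^2}\,dA(z)$. Integrate against $d\mu(\omega)$ and swap the order of integration via Fubini, using the symmetry $\omega\in B(z,r) \iff z\in B(\omega,r)$. The inner integral collapses to $\mu(B(z,r)) = \pi r^2\, \widehat\mu_r(z)$, converting the right-hand side into a constant multiple of $\langle T_{\widehat\mu_r} f, f\rangle$. This yields the operator inequality with a constant depending only on $\alpha$ and $r$, not on $\mu$.

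For the boundedness of $T_{\widehat\mu_r}$, apply Theorem~3.1 to the measure $d\nu = \frac{\alpha}{\pi} \widehat\mu_r\,dA$: $T_{\widehat\mu_r}$ is bounded on $F_h^2$ if and only if $\nu$ is a harmonic Fock-Carleson measure, i.e.\ $\sup_{z\in\mathbb{C}} \nu(B(z,s))<\infty$ for some (equivalently, every) $s>0$. By Hölder's inequality with conjugate exponent $p' = p/(p-1)$ (interpreted as $p'=\infty$ when $p=1$), $\int_{B(z,s)} \widehat\mu_r\,dA \leq \|\widehat\mu_r\|_{L^p}\, (\pi s^2)^{1/p'}$, which is finite and independent of $z$ by hypothesis. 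Hence $T_{\widehat\mu_r}$ is bounded, and combined with the operator inequality we obtain $\|T_\mu\| \leq C\|T_{\widehat\mu_r}\| < \infty$, establishing both boundedness claims.

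The main technical step is the Fubini swap together with the geometric identity $\mu(B(z,r)) = \pi r^2\, \widehat\mu_r(z)$, which is the bridge between the raw measure $\mu$ and its averaged version. Everything else is bookkeeping of constants, with the only mild subtlety being the $p=1$ endpoint of the Hölder step, which is handled directly by $\int_{B(z,s)} \widehat\mu_r\,dA \leq \|\widehat\mu_r\|_{L^1}$. I do not foresee any serious obstacle; the hardest part is simply choosing the right order of operations so that Lemma~2.2 plugs cleanly into the quadratic-form comparison.
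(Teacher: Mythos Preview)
Your proposal is correct and follows essentially the same route as the paper: both compare the quadratic forms $\langle T_\mu f,f\rangle$ and $\langle T_{\widehat\mu_r} f,f\rangle$ via Fubini's theorem together with Lemma~2.2 to obtain the operator inequality, and both invoke Theorem~3.1 for the boundedness claims. Your H\"older argument showing $\sup_z \int_{B(z,s)}\widehat\mu_r\,dA<\infty$ actually supplies a detail that the paper merely asserts, and deducing the boundedness of $T_\mu$ from that of $T_{\widehat\mu_r}$ via the operator inequality (rather than separately) is a slightly cleaner organization.
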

\begin{proof}
Since $\widehat{\mu}_{r}\in L^{p}(\mathbb{C},dA)$, by Theorem 3.1, we can prove that  $T_{\mu}$ and $T_{\hat{\mu}_{r}}$ are bounded.
So if $f\in F_{h}^{2}$, we use Fubini's theorem to obtain that
\begin{align}\nonumber
\langle T_{\hat{\mu}_{r}}f, f \rangle
=& \int_{\mathbb{C}}T_{\hat{\mu}_{r}}f(z)\overline{f(z)}d\lambda_{\alpha}(z)\\\nonumber
=& \int_{\mathbb{C}}\hat{\mu}_{r}(z)|f(z)|^{2}d\lambda_{\alpha}(z)\\\nonumber
=& \frac{1}{\pi r^{2}}\int_{\mathbb{C}}\mu(B(z, r))|f(z)|^{2}d\lambda_{\alpha}(z)\\\nonumber
=& \frac{1}{\pi r^{2}}\int_{\mathbb{C}}|f(z)|^{2}[\int_{B(z, r)}d\mu(\omega)]d\lambda_{\alpha}(z)\\\nonumber
=& \frac{1}{\pi r^{2}}\int_{\mathbb{C}}|f(z)|^{2}d\lambda_{\alpha}(z)\int_{\mathbb{C}}  \chi_{B(z, r)}(\omega)d\mu(\omega)\\\nonumber
=& \frac{1}{\pi r^{2}}\int_{\mathbb{C}}d\mu(\omega)\int_{\mathbb{C}}|f(z)|^{2} \chi_{B(z, r)}(\omega)d\lambda_{\alpha}(z)\\\nonumber
=&\frac{\alpha}{\pi^2 r^{2}}\int_{\mathbb{C}}d\mu(\omega)\int_{B(\omega, r)}|f(z)e^{-\frac{\alpha}{2}|z|^{2}}|^{2}dA(z).
\end{align}
According to Lemma 2.2, there exists a positive constant $C$ such that
$$ \langle T_{\mu}f, f\rangle=\int_{\mathbb{C}}|f(\omega)|^{2}e^{-\alpha|\omega|^{2}}d\mu(\omega) \leq C\langle T_{\hat{\mu}_{r}}f, f \rangle $$
\end{proof}
which shows that $T_\mu\leq C T_{\hat{\mu}_r}.$
\par
Next, we will give several equivalent conditions about the classification of $T_{\mu}$.
\begin{theorem}
Suppose $\mu$ is a positive Borel measure, $p\geqslant1, r>0$, and the lattice in $\mathbb{C}$ is generated by $\{a_{n}\}$
with radius $r$. Then the following conditions are equivalent:\\
$(a)$ The operator $T_{\mu}$ is in Schatten class $S_{p}$.\\
$(b)$ The function $\widetilde{\mu}_{r}(z)$ is in $L^{p}(\mathbb{C}, dA)$.\\
$(c)$ The function $\hat{\mu}_r(z)$ is in $L^{p}(\mathbb{C}, dA)$.\\
$(d)$ The sequence $\{\mu(B(a_{n},r))\}$ is in $l^{p}$.
\end{theorem}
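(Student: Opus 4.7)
The plan is to close the implications in the cycle $(d)\Rightarrow(c)\Rightarrow(a)\Rightarrow(b)\Rightarrow(c)\Rightarrow(d)$, using positivity of $T_\mu$ throughout so that the order property $0\le A\le B\Rightarrow\|A\|_{S_p}\le\|B\|_{S_p}$ and the operator Jensen inequality are available. I read condition $(b)$ as asserting that the Berezin transform $\widetilde{\mu}$ lies in $L^p(\mathbb{C},dA)$, since no separate object $\widetilde{\mu}_r$ has been defined. The equivalence $(c)\Leftrightarrow(d)$ is a geometric comparison between continuous and discrete averages: since the lattice squares $S_r+a_n$ tile $\mathbb{C}$ and each ball $B(z,r)$ with $z\in S_r+a_n$ is trapped between balls centered at $a_n$ of comparable radii, $\widehat{\mu}_r(z)$ is pointwise comparable, up to a bounded sum over neighbouring lattice points, to $\mu(B(a_n,r))/(\pi r^2)$. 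Integrating the $p$-th power over each cell and summing yields $\|\widehat{\mu}_r\|_{L^p}^p\approx\sum_n\mu(B(a_n,r))^p$.

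The substantial step is $(c)\Rightarrow(a)$. Lemma 4.1 provides the positive-operator domination $T_\mu\le C\,T_{\widehat{\mu}_r}$, so by eigenvalue monotonicity it suffices to show that $T_\varphi\in S_p$ whenever $\varphi=\widehat{\mu}_r$ is a nonnegative function in $L^p(\mathbb{C},dA)$. At the endpoint $p=1$ this is Theorem 4.1 applied to $d\mu=(\alpha/\pi)\varphi\,dA$, yielding $\operatorname{tr}(T_\varphi)=(\alpha/\pi)\|\varphi\|_{L^1}$; at $p=\infty$ one has $\|T_\varphi\|\le\|\varphi\|_{L^\infty}$ since $T_\varphi=PM_\varphi$ with $P$ a contraction. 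Complex interpolation of the linear map $\varphi\mapsto T_\varphi$ between $(L^1,S_1)$ and $(L^\infty,S_\infty)$ then delivers the $L^p\to S_p$ bound for $1<p<\infty$, and pushing the bound back to $T_\mu$ through the order domination finishes $(c)\Rightarrow(a)$.

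For $(a)\Rightarrow(b)$, since each $h_z$ is a unit vector in $F_h^2$ (Lemma 2.1), the scalar Jensen inequality applied to the spectral measure of the positive operator $T_\mu$ gives $\widetilde{\mu}(z)^p=\langle T_\mu h_z,h_z\rangle^p\le\langle T_\mu^p h_z,h_z\rangle$. Integrating in $z$ and invoking the trace formula (4.1), whose derivation uses only positivity of the operator together with the expansion $H_z(\omega)=\sum_n\overline{e_n(z)}e_n(\omega)$ and therefore applies equally to $T_\mu^p$, gives $\|\widetilde{\mu}\|_{L^p}^p\le C\,\operatorname{tr}(T_\mu^p)=C\,\|T_\mu\|_{S_p}^p$. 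Finally $(b)\Rightarrow(c)$ is immediate: Lemma 2.3 furnishes the pointwise bound $\widehat{\mu}_r(z)\le C\,\widetilde{\mu}(z)$, and $L^p$ membership transfers at once.

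I expect the principal obstacle to be the interpolation step in $(c)\Rightarrow(a)$: the positive linear map $\varphi\mapsto T_\varphi$ must be verified to satisfy the endpoint bounds on a dense subclass (say bounded, compactly supported $\varphi$), after which the standard identification $S_p=(S_1,S_\infty)_{[1/p]}$ allows complex interpolation to be applied cleanly. An alternative route avoiding interpolation would be to estimate $\operatorname{tr}(T_\mu^p)$ directly by expanding the power and integrating against products $|h_{z_1}(z_2)|^2\cdots|h_{z_{p-1}}(z_p)|^2$; this requires off-diagonal Gaussian decay for the harmonic reproducing kernel $H_z$, which is more explicit but technically heavier because the anti-holomorphic piece of $H_z$ produces cross terms absent in the analogous analytic Fock setting treated by Isralowitz and Zhu.
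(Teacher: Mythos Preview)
Your proposal is correct and follows essentially the same route as the paper: $(a)\Rightarrow(b)$ via the operator Jensen inequality and the trace formula, $(b)\Rightarrow(c)$ via Lemma~2.3, $(c)\Rightarrow(a)$ via the domination $T_\mu\le C\,T_{\widehat{\mu}_r}$ from Lemma~4.1 together with interpolation of $\varphi\mapsto T_\varphi$ between the $L^1\to S_1$ and $L^\infty\to S_\infty$ endpoints, and $(c)\Leftrightarrow(d)$ by the geometric cell comparison. Your reading of condition~$(b)$ as $\widetilde{\mu}\in L^p$ matches the paper's intent, and your discussion of the interpolation step is in fact more careful than the paper's own treatment.
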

\begin{proof}
Suppose $p\geqslant1$ and  $T$ is a positive operator on a Hilbert space $H$. if $T\in S_{p}$, then $T^{p}\in S_{1}$ and $\langle Tx, x\rangle^{p} ~\leq~  \langle T^{p}x, x\rangle$ for $x\in H $ in \cite{ref15}, we have
$$(\tilde{\mu}(z))^{p} \leqslant \widetilde{T_{\mu}^{p}}(z)$$
because for $f\in {F_h}^2$,
$$\langle T_\mu f, f\rangle^{p} ~\leq~  \langle{T_\mu}^{p}f, f\rangle.$$
So
$$\int_{\mathbb{C}}|\tilde{\mu}(z)|^{p}dA(z) \leqslant \int_{\mathbb{C}}|\widetilde{T_{\mu}^{p}}(z)|dA(z)=tr(T_{\mu}^{p})<+\infty,$$
so condition $(a)$ implies $(b)$. Lemma 2.3 shows that condition $(b)$ implies $(c)$.
\par If condition $(c)$ holds, suppose $\widehat{\mu}_{r}(z) \in L^{\infty}(\mathbb{C}, dA)$, then
\begin{align}\nonumber
|\langle T_{\widehat{\mu}_{r}}e_n,e_n\rangle|
=&\int_{\mathbb{C}}\widehat{\mu}_{r}(z)|e_n(z)|^2d\lambda_\alpha(z)\\\nonumber
\leq & \parallel\widehat{\mu}_{r}\parallel_\infty \int_{\mathbb{C}}|e_n(z)|^2d\lambda_\alpha(z)\\\nonumber
=&\parallel\widehat{\mu}_{r}\parallel_\infty
\end{align}
where $\{e_n(z)\}_{n=-\infty}^{+\infty}$ is the orthonormal basis of $F_{h}^{2}$, thus $T_{\widehat{\mu}_{r}}\in S_\infty$.
If $\widehat{\mu}_{r}(z) \in L^{1}(\mathbb{C}, dA)$,
\begin{align}\nonumber
\sum_{n=-\infty}^{+\infty}|\langle T_{\widehat{\mu}_{r}}e_n,e_n\rangle|
=&\sum_{n=-\infty}^{+\infty}\int_{\mathbb{C}}\widehat{\mu}_{r}(z)|e_n(z)|^2d\lambda_\alpha(z)\\\nonumber
=&\frac{\alpha}{\pi} \int_{\mathbb{C}}\widehat{\mu}_{r}(z)(\sum_{n=-\infty}^{+\infty}|e_n(z)|^2) e^{-\alpha|z|^2}dA(z)\\\nonumber
=&\frac{\alpha}{\pi} \int_{\mathbb{C}}\widehat{\mu}_{r}(z)H_z(z) e^{-\alpha|z|^2}dA(z)\\\nonumber
=&\frac{2\alpha}{\pi}\parallel\widehat{\mu}_{r}\parallel_1
\end{align}
then we get $T_{\widehat{\mu}_{r}}\in S_1$. By interpolation, $T_{\widehat{\mu}_{r}}\in S_p,$
 when $\widehat{\mu}_{r}\in L^{p}(\mathbb{C}, dA).$ And by $T_{\mu}\leq C T_{\widehat{\mu}_{r}}$ for
 some $C>0$, $T_{\mu}\in S_p.$ So condition $(a)$ holds.
\par
We will prove that condition $(d)$ is equivalent to the other conditions. We first suppose that condition $(c)$ holds, which implies that
the function $\mu(B(z, 2r))$ is in $L^{p}(\mathbb{C}, dA)$. Choose a positive integer $N$ such that each point in the complex plane belongs to
at most $N$ of the discs $B(a_{n}, r)$. When $1\leqslant p< +\infty$,
$$\sum_{n=1}^{\infty}\int_{B(a_{n}, r)}\mu(B(z, 2r))^{p}dA(z)\leq N\int_{\mathbb{C}}\mu(B(z, 2r))^{p}dA(z).$$
For each $z\in B(a_{n}, r)$ we deduce from the triangle inequality that
 $$ \mu(B(a_{n},r))\leq \mu(B(z, 2r)),$$
 Thus
 $$ \pi r^{2}\sum_{n=1}^{\infty}\mu(B(a_{n}, r))^{p}\leq N\int_{\mathbb{C}}\mu(B(z, 2r))^{p}dA(z).$$
 When  $\widehat{\mu_{r}}(z)$ is in $L^{\infty}(\mathbb{C}, dA)$ for each $z\in B(a_{n}, r)$ we have
 $$sup|\mu(B(a_{n},r))|\leqslant sup|\mu(B(z, 2r))|< +\infty.$$
 This shows that condition $(c)$ implies $(d)$.
 \par Conversely, we assume that condition $(d)$ holds, that is
 $$\sum_{n=1}^{\infty}\mu(B(a_{n}, r))^{p}< +\infty$$
 for any $1\leqslant p< +\infty$. When $z_{n}=\frac{mr}{2}+\frac{nri}{2}$, it is easy to see $$\sum_{n=1}^{\infty}\mu(B(z_{n}, r))^{p}< +\infty.$$
 In fact, for each point $z_{k}$ that is not in the lattice $\{a_{n}\}$, the disc $B(z_{k}, r)$ is covered by six adjacent discs $B(z_{k}, r)$.
 So
\begin{align}\nonumber
\int_{\mathbb{C}}\mu(B(z, \frac{r}{2}))^{p}dA(z)
\leqslant &\sum_{n=1}^{\infty}\int_{B(z_{n}, \frac{r}{2})}\mu(B(z, \frac{r}{2}))^{p}dA(z)\\\nonumber
\leqslant &\sum_{n=1}^{\infty}\int_{B(z_{n}, \frac{r}{2})}\mu(B(z_{n}, r))^{p}dA(z)\\\nonumber
=&\frac{\pi r^{2}}{4}\sum_{n=1}^{\infty}\mu(B(z_{n}, r))^{p}\\\nonumber
=&\frac{6\pi r^{2}}{4}\sum_{n=1}^{\infty}\mu(B(a_{n}, r))^{p}<\infty.
\end{align}
When $p=+\infty$, we obtain the result by Theorem 2.1.
This shows that condition $(d)$ implies $(c)$. Now we have completed the proof of this theorem.
\end{proof}
\par
Suppose the nonnegative function $\varphi \in L^1(\mathbb{C},dA)$ and $r>0$, we define a new function
$$\hat{\varphi}_r(z)=\frac{1}{\pi\alpha}\int_{B(z, r)} \varphi(\omega)dA(\omega)$$
on $\mathbb{C}$. The following results are immediate consequence of  Theorem 3.1, 3.2 and Theorem 4.2.
\begin{corollary}
If $\varphi$ is a nonnegative function in $L^1(\mathbb{C},dA)$ and $r>0$, then
following conditions are equivalent:\\
$(a)$ $T_\varphi$ is bounded on $F_h^2$;\\
$(b)$ $\tilde{\varphi}(z)$ is bounded on $\mathbb{C}$;\\
$(c)$ $\hat{\varphi}_r(z)$ is bounded on $\mathbb{C}$.
\end{corollary}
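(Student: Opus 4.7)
The plan is to reduce Corollary 4.1 to the measure-symbol results already established, by taking $d\mu=\frac{\alpha}{\pi}\varphi\,dA$. Under this identification, the computations in the introduction show that $T_\varphi=T_\mu$ as operators on $F_h^2$, and that the Berezin transforms agree: $\tilde{\varphi}(z)=\tilde{\mu}(z)$. Applying Theorem 3.1 to this specific $\mu$ then yields (a) $\Longleftrightarrow$ (b) at once.

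For (b) $\Longleftrightarrow$ (c), I would compute
$$\hat{\mu}_r(z)=\frac{\mu(B(z,r))}{\pi r^2}=\frac{\alpha}{\pi^2 r^2}\int_{B(z,r)}\varphi(\omega)\,dA(\omega),$$
which is a fixed positive multiple of $\hat{\varphi}_r(z)$. Thus boundedness of $\hat{\varphi}_r$ on $\mathbb{C}$ is equivalent to boundedness of $\mu(B(z,r))$ in $z$, and by Theorem 2.1 the latter is equivalent to boundedness of $\tilde{\mu}=\tilde{\varphi}$, which is (b). Chaining these equivalences closes the loop. Alternatively, one could invoke the $p=+\infty$ case noted at the end of the proof of Theorem 4.2.

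There is no substantive obstacle: the corollary is essentially a translation of Theorems 2.1 and 3.1 to the setting where the measure arises from a density $\varphi$. The only care required is bookkeeping of the normalization constants between $\hat{\mu}_r$ and $\hat{\varphi}_r$, so that the property of being uniformly bounded in $z$ is preserved when passing from the measure to the density formulation.
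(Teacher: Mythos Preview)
Your proposal is correct and matches the paper's approach exactly: the paper states that Corollary 4.1 is an immediate consequence of Theorem 3.1 (together with Theorem 2.1 for the equivalence involving $\hat{\varphi}_r$), which is precisely the reduction you carry out by setting $d\mu=\frac{\alpha}{\pi}\varphi\,dA$ and tracking the normalization constants.
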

\begin{corollary}
If $\varphi$ is a nonnegative function in $L^1(\mathbb{C},dA)$ and $r>0$, then
following conditions are equivalent:\\
$(a)$ $T_\varphi$ is compact on $F_h^2$;\\
$(b)$ $\tilde{\varphi}(z)\rightarrow 0$ as $|z|\rightarrow +\infty$;\\
$(c)$ $\hat{\varphi}_r(z)\rightarrow 0$ as $|z|\rightarrow +\infty$.
\end{corollary}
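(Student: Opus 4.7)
The plan is to deduce this corollary directly from Theorem 3.2 by specializing to the absolutely continuous measure $d\mu = \frac{\alpha}{\pi}\varphi\,dA$. First I would verify that, since $\varphi \geq 0$ lies in $L^{1}(\mathbb{C}, dA)$, this $\mu$ is a finite positive Borel measure on $\mathbb{C}$, and that $T_{\mu}$ coincides with $T_{\varphi}$; this is exactly the identification already made in the introduction, where $T_{\varphi} = PM_{\varphi}$ is written as the specialization of $T_{\mu}$ with density $\frac{\alpha}{\pi}\varphi$. The Berezin transform computation of the introduction shows in parallel that $\widetilde{\mu}(z) = \widetilde{\varphi}(z)$, because the integrals defining the two transforms differ only by absorbing $d\mu = \frac{\alpha}{\pi}\varphi\,dA$ into $d\lambda_{\alpha}$.

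Next I would relate the two averaged quantities appearing in the statements. Directly from the definitions,
\[
\mu(B(z,r)) \;=\; \frac{\alpha}{\pi}\int_{B(z,r)}\varphi(\omega)\,dA(\omega) \;=\; \alpha^{2}\,\hat{\varphi}_{r}(z),
\]
so $\mu(B(z,r))$ and $\hat{\varphi}_{r}(z)$ differ only by the positive constant $\alpha^{2}$. In particular, $\hat{\varphi}_{r}(z) \rightarrow 0$ as $|z| \rightarrow +\infty$ if and only if $\mu(B(z,r)) \rightarrow 0$ as $|z| \rightarrow +\infty$.

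With these identifications in hand, I would invoke Theorem 3.2 applied to $\mu$. Its condition $(a)$, compactness of $T_{\mu}$ on $F_{h}^{2}$, is precisely condition $(a)$ of the corollary via $T_{\mu} = T_{\varphi}$, and its condition $(b)$, $\widetilde{\mu}(z) \rightarrow 0$, is precisely condition $(b)$ via $\widetilde{\mu} = \widetilde{\varphi}$. Its condition $(c)$, that $\mu$ is a vanishing harmonic Fock--Carleson measure, is equivalent by Theorem 2.2 to $\mu(B(z,r)) \rightarrow 0$, which by the identity above coincides with condition $(c)$ of the corollary. Chaining these gives $(a) \Leftrightarrow (b) \Leftrightarrow (c)$.

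There is no real obstacle: the argument is entirely a translation between the $\mu$ and $\varphi$ notations. The only points that deserve a line of care are tracking the normalizing constant in the identity $\mu(B(z,r)) = \alpha^{2}\,\hat{\varphi}_{r}(z)$, and verifying that the hypothesis $\varphi \in L^{1}(\mathbb{C}, dA)$ is enough to place $\mu$ in the framework of Theorem 3.2, which only demands that $\mu$ be a positive Borel measure.
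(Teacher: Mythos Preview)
Your proposal is correct and matches the paper's own approach: the paper simply declares this corollary an ``immediate consequence'' of Theorem~3.2 (together with Theorem~2.2), which is exactly the specialization $d\mu=\frac{\alpha}{\pi}\varphi\,dA$ you carry out. Your tracking of the constant in $\mu(B(z,r))=\alpha^{2}\hat{\varphi}_r(z)$ is consistent with the paper's definition $\hat{\varphi}_r(z)=\frac{1}{\pi\alpha}\int_{B(z,r)}\varphi\,dA$, and the remaining identifications $T_\mu=T_\varphi$, $\widetilde{\mu}=\widetilde{\varphi}$ are indeed already established in the introduction.
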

\begin{corollary}
If $\varphi$ is a nonnegative function in $L^1(\mathbb{C},dA)$ and $r>0$,$1\leq p\leq +\infty$, then
following conditions are equivalent:\\
$(a)$ $T_\varphi$ is in the Schatten class $S_p$;\\
$(b)$ $\tilde{\varphi}(z)$ is in  $L^p(\mathbb{C},dA)$;\\
$(c)$ $\hat{\varphi}_r(z)$ is in  $L^p(\mathbb{C},dA)$.
\end{corollary}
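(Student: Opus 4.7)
The plan is to deduce this corollary directly from Theorem 4.2 by specializing to the measure $d\mu=\frac{\alpha}{\pi}\varphi\,dA$. Since $\varphi\geqslant 0$ and $\varphi\in L^1(\mathbb{C},dA)$, this $\mu$ is a locally finite positive Borel measure on $\mathbb{C}$, so Theorem 4.2 applies. The task then reduces to three dictionary checks that match the three conditions of the corollary with conditions (a), (b), (c) of Theorem 4.2 applied to this particular $\mu$.

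First, I would note that with this choice of $\mu$, the definition of the Toeplitz operator in the introduction gives $T_\mu=T_\varphi$ as operators on $F_h^2$, so condition (a) of the corollary is exactly condition (a) of Theorem 4.2. Second, using the computation of $\widetilde{T}_\mu$ given in the introduction together with $d\mu=\frac{\alpha}{\pi}\varphi\,dA$, a single substitution yields
\begin{align*}
\widetilde{\mu}(z)=\int_{\mathbb{C}}|h_z(u)|^2 e^{-\alpha|u|^2}\,d\mu(u)
=\int_{\mathbb{C}}\varphi(u)|h_z(u)|^2\,d\lambda_\alpha(u)=\widetilde{\varphi}(z),
\end{align*}
so condition (b) of the corollary is identical to condition (b) of Theorem 4.2. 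Third, a direct unpacking of the averages shows
\begin{align*}
\hat{\mu}_r(z)=\frac{\mu(B(z,r))}{\pi r^2}=\frac{\alpha}{\pi^2 r^2}\int_{B(z,r)}\varphi(\omega)\,dA(\omega)=\frac{\alpha^2}{\pi r^2}\,\hat{\varphi}_r(z),
\end{align*}
so $\hat{\mu}_r$ and $\hat{\varphi}_r$ differ only by a positive constant depending on $\alpha$ and $r$. Hence $\hat{\varphi}_r\in L^p(\mathbb{C},dA)$ iff $\hat{\mu}_r\in L^p(\mathbb{C},dA)$, matching condition (c) of Theorem 4.2.

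Invoking Theorem 4.2 with the above identifications, the three conditions of the corollary are mutually equivalent. For the endpoint $p=\infty$ (interpreting $S_\infty$ as in the paper's convention) the same dictionary reduces the claim to the already-established Corollary 4.1. I do not expect any genuine obstacle here; the only thing to be careful about is tracking the normalizing constants $\alpha/\pi$, $1/(\pi r^2)$ and $1/(\pi\alpha)$ so that the relation between $\hat{\mu}_r$ and $\hat{\varphi}_r$ is a constant multiple rather than merely proportional up to comparison, which preserves $L^p$ membership cleanly for every $p\in[1,\infty]$.
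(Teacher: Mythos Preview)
Your proposal is correct and matches the paper's own approach: the paper gives no separate proof for this corollary, stating only that it is an ``immediate consequence'' of the earlier theorems, and your argument is precisely the specialization $d\mu=\frac{\alpha}{\pi}\varphi\,dA$ that makes this immediate. Your dictionary checks are accurate, including the constant relating $\hat{\mu}_r$ and $\hat{\varphi}_r$, and your remark that the endpoint $p=\infty$ falls back on Corollary~4.1 is a sensible way to cover the case not explicitly stated in Theorem~4.2.
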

\par
Compared with the property of $T_{\mu}$, Toeplitz operators $T_\varphi$ induced by $\varphi$ can be classified into the Schatten class $S_{p}$ by the following theorem.
\begin{theorem}
If  $\varphi $ is a nonnegative function in $L^{p}(\mathbb{C}, dA)$, $p\geqslant1$, then $T_{\varphi}\in S_{p}$.
\end{theorem}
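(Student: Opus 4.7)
The plan is to reduce the statement to a condition on the averaged function $\hat{\varphi}_r$ and then verify that condition by an elementary Hölder/Fubini argument. Specifically, by Corollary 4.5 the membership $T_\varphi \in S_p$ is equivalent to $\hat{\varphi}_r \in L^p(\mathbb{C}, dA)$, so it suffices to show that whenever $\varphi$ is a nonnegative function in $L^p(\mathbb{C}, dA)$, the localized average $\hat{\varphi}_r$ also lies in $L^p(\mathbb{C}, dA)$.

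To carry this out, I would fix $z \in \mathbb{C}$ and apply Hölder's inequality on the ball $B(z, r)$ with exponents $p$ and its conjugate $p' = p/(p-1)$, giving
\begin{equation*}
\int_{B(z,r)} \varphi(\omega)\, dA(\omega) \leqslant (\pi r^{2})^{1-1/p}\left(\int_{B(z,r)} \varphi(\omega)^{p}\, dA(\omega)\right)^{1/p}.
\end{equation*}
Raising to the $p$-th power and dividing by $(\pi\alpha)^{p}$ yields a bound of the form
\begin{equation*}
\hat{\varphi}_{r}(z)^{p} \leqslant C \int_{B(z,r)} \varphi(\omega)^{p}\, dA(\omega),
\end{equation*}
where $C = C(p,\alpha,r)$. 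The case $p=1$ requires no Hölder step and is immediate; the general $p \geqslant 1$ case follows in the same shape.

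Next I would integrate this pointwise inequality over $z$ and apply Fubini's theorem, switching the order of integration. Since $\omega \in B(z, r)$ if and only if $z \in B(\omega, r)$, and $A(B(\omega, r)) = \pi r^{2}$, this produces
\begin{equation*}
\int_{\mathbb{C}} \hat{\varphi}_{r}(z)^{p}\, dA(z) \leqslant C \int_{\mathbb{C}} \varphi(\omega)^{p} A(B(\omega, r))\, dA(\omega) = C\pi r^{2} \int_{\mathbb{C}} \varphi(\omega)^{p}\, dA(\omega) < +\infty.
\end{equation*}
This shows $\hat{\varphi}_{r} \in L^{p}(\mathbb{C}, dA)$, and invoking Corollary 4.5 completes the proof.

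There is no serious obstacle here: the argument is a standard averaging estimate combined with the characterization already established in Corollary 4.5. The only small point worth being careful about is tracking the constants arising from the factor $1/(\pi\alpha)$ in the definition of $\hat{\varphi}_{r}$ and from the Hölder exponent $1-1/p$, so that the chain of equivalences and the $p=1$ endpoint are handled uniformly.
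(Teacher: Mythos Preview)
Your argument is correct in substance but takes a different route from the paper, and there is one small hypothesis mismatch worth flagging.

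\textbf{The gap.} The corollary you invoke (the Schatten-class characterization for $T_\varphi$ via $\hat{\varphi}_r$) is stated in the paper under the standing hypothesis that $\varphi\in L^{1}(\mathbb{C},dA)$, whereas the theorem you are proving only assumes $\varphi\in L^{p}(\mathbb{C},dA)$, and for $p>1$ on an infinite-measure space these do not nest. The fix is immediate: bypass the corollary and apply Theorem~4.2 directly to the positive Borel measure $d\mu=\frac{\alpha}{\pi}\varphi\,dA$; your H\"older--Fubini computation shows $\hat{\mu}_r\in L^{p}(\mathbb{C},dA)$, which is condition~(c) there, and condition~(a) gives $T_\mu=T_\varphi\in S_p$. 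With that one-line adjustment your proof is complete.

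\textbf{Comparison with the paper.} The paper does not pass through the $\hat{\varphi}_r$ characterization at all. Instead it argues spectrally: first reduce to compactly supported $\varphi$ so that $T_\varphi$ is positive and compact, write its canonical decomposition $T_\varphi f=\sum_n\lambda_n\langle f,e_n\rangle e_n$, observe $\lambda_n=\int_{\mathbb{C}}|e_n|^2\varphi\,d\lambda_\alpha$, apply H\"older's inequality against the probability measure $|e_n|^2\,d\lambda_\alpha$ to get $\lambda_n^{p}\leqslant\int_{\mathbb{C}}|e_n|^2\varphi^{p}\,d\lambda_\alpha$, and then sum over $n$ using $\sum_n|e_n(z)|^2=H_z(z)=2e^{\alpha|z|^2}$ to obtain the explicit bound $\|T_\varphi\|_{S_p}^{p}\leqslant\frac{2\alpha}{\pi}\|\varphi\|_{p}^{p}$. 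The general case is then handled by writing $\varphi=\sum_n\varphi\chi_n$ over annuli. Your approach is shorter because it leverages the machinery of Theorem~4.2 already in place; the paper's approach is self-contained and yields a sharp explicit constant, which your averaging argument does not.
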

\begin{proof}
For any $f\in F_{h}^{2}$, if we assume that $\varphi$ has compact support in $\mathbb{C}$, then $T_{\varphi}$ is a positive compact operator whose canonical decomposition
is of the form
$$T_{\varphi}f=\sum_{n=-\infty}^{+\infty}\lambda_{n}\langle f, e_{n}\rangle e_{n},$$
where $\{\lambda_{n}\}$ is the eigenvalue sequence of $T_{\varphi}$ and $\{e_{n}\}$ is an orthonormal basis in $F_{h}^{2}$. In particular,
$$\lambda_{n} = \langle T_{\varphi}e_{n}, e_{n}\rangle= \int_{\mathbb{C}}|e_{n}(z)|^{2}\varphi(z)d\lambda_{\alpha}(z)$$
for all $n \in \mathbb{Z}$. When $1\leqslant p < +\infty$, it follows from H$\ddot{o}$lder inequality that
$$|\lambda_{n}|^{p} \leqslant \int_{\mathbb{C}}|e_{n}(z)|^{2}|\varphi(z)|^{p}d\lambda_{\alpha}(z),$$
then by proposition 7.11 in \cite{ref15}, we have
\begin{align}\nonumber
\sum_{n=-\infty}^{+\infty}|\lambda_{n}|^{p}
\leqslant &\int_{\mathbb{C}}(\sum_{n=-\infty}^{+\infty}|e_{n}(z)|^{2})|\varphi(z)|^{p}d\lambda_{\alpha}(z)\\\nonumber
= &\int_{\mathbb{C}}|\varphi(z)|^{p}H_{z}(z)d\lambda_{\alpha}(z)\\\nonumber
= &\frac{2\alpha}{\pi}\int_{\mathbb{C}}|\varphi(z)|^{p}dA(z).
\end{align}
When $\varphi\in L^{p}(\mathbb{C}, dA)$, $$\|T_{\varphi}\|_{S_{p}}^{p} \leqslant \frac{2\alpha}{\pi}\int_{\mathbb{C}}|\varphi(z)|^{p}dA(z) < +\infty.$$
When $p=+\infty,$
\begin{align}\nonumber
|\lambda_{n}|
\leqslant &\int_{\mathbb{C}}|e_{n}(z)|^{2}|\varphi(z)|d\lambda_{\alpha}(z)\\\nonumber
\leqslant &\|\varphi\|_{\infty}\|e_{n}\|_{2}\\\nonumber
=&\|\varphi\|_{\infty}.
\end{align}
Thus, $\{\lambda_{n}\}\in l^{\infty}$, that is $T_{\varphi}\in S_{\infty}$.
\par If $\varphi$ hasn't compact support in $\mathbb{C}$, then we let $\varphi_{n}=\varphi\chi_{n}$,
where
\begin{equation}\nonumber
\chi_{n}(z)=
\begin{cases}
1, & n-1<|z|\leqslant n,\\
0, & |z|\leqslant n-1 ~~or~~ |z|>n .
\end{cases}
\end{equation}
For $\varphi_{n}$, the Theorem holds, $\varphi=\sum\limits_{n=1}^{+\infty}\varphi_{n}$.
The desired result is proved.
\end{proof}
\par
\textbf{Declarations}: In our research, X.Gou and S.Huang calculate and prove results in the paper together. X.Gou drafts the article and S.Huang revise it carefully. S.Huang is supported by National Natural Science Foundation of China(Grant No.11501068)and the Natrual Science Foundation of Chongqing (No.cstc2019jcyj-msxmX0295).
\par
\textbf{Declarations of interest}: none.

\end{document}